\newcommand{\name}[1]{\textsc{#1\/}}
\DeclareMathOperator{\Aff}{Aff}
\DeclareMathOperator{\SAff}{SAff}
\DeclareMathOperator{\J}{J}
\DeclareMathOperator{\jac}{jac}
\address{\noindent Institut f\"{u}r Mathematik, Friedrich-Schiller-Universit\"{a}t Jena,   Jena 07737, Germany}
\email{andriyregeta@gmail.com}
\newtheorem{claim}{Claim}
\theoremstyle{plain}
\newtheorem{theorem}{Theorem}[section]
\newtheorem{lemma}[theorem]{Lemma}
\newtheorem{proposition}[theorem]{Proposition}
\theoremstyle{definition}
\newtheorem{definition}[theorem]{Definition}
\newtheorem{remark}[theorem]{Remark}
\def\Aut{\operatorname{Aut}}
\def\Tr{\operatorname{Tr}}
\def\Bir{\operatorname{Bir}}
\def\SAut{\operatorname{SAut}}
\def\Der{\operatorname{Der}}
\def\alg{\operatorname{alg}}
\def\deg{\operatorname{deg}}
\def\id{{\mathrm{id}}}
\def\ZZ{{\mathbb Z}}
\def\CC{{\mathbb C}}
\def\T{{\mathrm T}}
\def\QQ{{\mathbb Q}}
\def\NN{{\mathbb N}}
\def\K{{\mathbb K}}
\def\G{{\mathbb G}}
\def\O{{\mathcal O}}
\def\embed{\hookrightarrow}
\def\Aff{\operatorname{Aff}}
\def\GL{\operatorname{GL}}
\def\PGL{\operatorname{PGL}}
\def\0{\circ}
\def\g{{\mathfrak g}}
\begin{document}

\title[When is the automorphism group of an affine variety linear?]{When is the automorphism group of an affine variety linear?}

\author{Andriy Regeta}

\begin{abstract} 
   Let  
$\mathrm{Aut}_{\text{alg}}(X)$ be the subgroup  of the group 
of regular automorphisms $\mathrm{Aut}(X)$ of an  affine algebraic variety $X$
 generated by
all  connected algebraic subgroups. We prove that  if $\dim X \ge 2$ and if $\mathrm{Aut}_{\text{alg}}(X)$ is ``rich enough'',  $\mathrm{Aut}_{\text{alg}}(X)$ is not linear, i.e., it cannot be embedded into $\GL_n(\K)$, where $\K$ is an algebraically closed field of characteristic zero. Moreover, $\Aut(X)$ is isomorphic to an algebraic group as an abstract group only if the connected component of
$\Aut(X)$
is either the algebraic
torus or a direct limit of commutative unipotent groups.
Finally, we prove that for an uncountable  $\K$  the group of birational transformations of $X$ cannot be isomorphic to the group of automorphisms of an affine variety if $X$ is endowed with a rational action of a positive-dimensional linear algebraic group.
\end{abstract}

\maketitle

\section{Introduction}

In this paper we work over algebraically closed  field $\K$ of characteristic zero, and $X$ always denotes an irreducible affine variety.
It is well-known that the automorphism group of an affine variety may be very large. For example, the automorphism group $\Aut(\mathbb{A}^2)$ of  the affine plane $\mathbb{A}^2$ contains a free product of two polynomial rings in one variable.  Consequently, $\Aut(\mathbb{A}^2)$ is infinite-dimensional and cannot be given a structure of an algebraic group.
Moreover,
 it is shown in \cite[Proposition 2.3]{Cor} that $\Aut(\mathbb{A}^2)$ is not linear, i.e. $\Aut(\mathbb{A}^2)$ cannot be embedded into the general linear group $\GL_n(\K)$ as an abstract group.
The first main result of the present note is a  generalization of this statement to a big family of affine varieties. 

 It is well-known (Proposition \ref{ind-group}) that the automorphism group $\Aut(X)$ has a structure of an  \emph{ind-group} (see Section \ref{Ind-groups} for the definition) and if $\dim X \ge 2$, $\Aut(X)$ is infinite-dimensional unless $\Aut(X)$ is a countable extension of the algebraic torus.
 But even if the  automorphism group $\Aut(X)$  is infinite-dimensional it may happen that $\Aut(X)$  embeds into $\GL_n(\K)$. For exmaple, \cite[Example 6.14]{LRU} shows that there is an affine surface $S$ such that $\Aut(S)$ is isomorphic to the polynomial ring in one variable $\K[t]$  and as an abstract additive group $\Aut(S)$ is isomorphic to the additive group of the base field and hence embeds into $\GL_2(\K)$.
However, if $\Aut(X)$ is rich enough, $\Aut(X)$ cannot be embedded into $\GL_n(\K)$. More precisely, we prove the following statement.
 
 We denote the additive and 
 multiplicative group of the field $\K$ by $\mathbb{G}_a$ and $\mathbb{G}_m$ respectively.
  For a given  affine variety $X$  we denote by $\Aut_{\alg}(X)$  the subgroup of $\Aut(X)$ generated by all connected algebraic subgroups.

\begin{theorem}\label{main}
Assume  dimension of $X$ is $\ge 2$
and $X$ is
not  isomorphic to  $\mathbb{A}^1 \times Y$, where  $Y$ is an affine variety that admits no faithful action of positive-dimensional connected algebraic group. 
If   $\Aut_{\alg}(X)$ is non-commutative and contains a copy of $\mathbb{G}_m$,  then $\Aut_{\alg}(X)$
cannot be embedded into
 $\GL_n(\K)$.
 \end{theorem}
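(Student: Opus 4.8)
The plan is to exploit the tension between linearity and the presence of a rich, non-commutative $\Aut_{\alg}(X)$ containing a torus. The key structural fact I would invoke is that a linear group over $\K$ is, by a theorem of Mal'cev (or the analogous finiteness results for linear groups), subject to strong constraints: it satisfies the ascending chain condition on centralizers only in limited circumstances, and, more usefully here, every unipotent-like subgroup is ``bounded''. Concretely, I would first observe that if $\Aut_{\alg}(X) \hookrightarrow \GL_n(\K)$, then every $\mathbb{G}_a$-subgroup maps to a one-dimensional unipotent subgroup, and every $\mathbb{G}_m$-subgroup maps to a diagonalizable one-dimensional torus; moreover the Zariski closure (in $\GL_n$) of $\Aut_{\alg}(X)$ is a linear algebraic group $G$ of some finite dimension $d$.

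The heart of the argument, as I see it, is a \emph{dimension/unipotent-rank} obstruction. Since $\dim X \ge 2$ and $X$ is not of the excluded product form $\mathbb{A}^1 \times Y$, and since $\Aut_{\alg}(X)$ is non-commutative, one should be able to produce inside $\Aut_{\alg}(X)$ an infinite strictly increasing chain of connected algebraic subgroups — for instance an infinite collection of distinct $\mathbb{G}_a$'s (root subgroups, coming from conjugating a single $\mathbb{G}_a$ by the torus $\mathbb{G}_m$ and by other algebraic subgroups), generating unipotent subgroups of unbounded dimension, or else an infinite-dimensional unipotent ind-subgroup. This is where Proposition~\ref{ind-group} is used: infinite-dimensionality of $\Aut(X)$ for $\dim X \ge 2$ (outside the torus-extension case) feeds in to guarantee the ind-group is genuinely infinite-dimensional, and the non-commutativity plus the $\mathbb{G}_m$ let us ``spread out'' a unipotent element into infinitely many independent directions via the adjoint action of the torus, so that the unipotent radical grows without bound. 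A subgroup of $\GL_n(\K)$, however, has unipotent subgroups of dimension at most $\binom{n}{2}$, and more to the point its Zariski closure has finite dimension; hence the image cannot contain such an infinite ascending chain — contradiction.

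More carefully, I would set it up as follows. Step 1: reduce to showing that $\Aut_{\alg}(X)$ contains either (a) an abelian unipotent ind-subgroup that is infinite-dimensional (i.e. an increasing union of $\mathbb{G}_a^{k}$), or (b) a sequence of copies of $\mathbb{G}_a$ no finite subset of which lies in a common algebraic subgroup. Step 2: show that the hypotheses force one of (a) or (b): take the given $\mathbb{G}_m \le \Aut_{\alg}(X)$ and a $\mathbb{G}_a$ not centralizing it (exists by non-commutativity, after possibly conjugating, using that $\Aut_{\alg}(X)$ is generated by connected algebraic subgroups so a non-commuting pair of such groups exists — and one can arrange the pair to be a torus and a $\mathbb{G}_a$ by the standard Jordan-decomposition/Lie-theoretic argument inside each algebraic subgroup they generate). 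The $\mathbb{G}_m$-weights on this $\mathbb{G}_a$ and its iterated commutators/conjugates produce infinitely many independent root subgroups unless everything collapses — and the collapse case is exactly where the $\mathbb{A}^1\times Y$ exclusion is needed to rule out the degenerate ``everything is an affine line direction'' scenario. Step 3: observe (a) is impossible in $\GL_n(\K)$ since an increasing union of unipotent algebraic subgroups has bounded dimension $\le \binom n2$; and (b) is impossible since the Zariski closure of any linear group is finite-dimensional, so any countable family of algebraic subgroups lies in a single finite-dimensional algebraic group, inside of which finitely many of the $\mathbb{G}_a$'s already generate a subgroup containing all but finitely many — contradiction with (b).

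The main obstacle I anticipate is Step 2: carefully extracting, from the bare hypotheses ``non-commutative and contains $\mathbb{G}_m$'', the desired infinite family of independent root subgroups, while honestly using — and not merely gesturing at — the exclusion of $\mathbb{A}^1 \times Y$ with $Y$ admitting no faithful positive-dimensional action. The delicate point is that $\Aut_{\alg}(X)$ might a priori be a ``small'' non-commutative group like the $2$-dimensional affine group $\operatorname{Aff}_1 = \mathbb{G}_a \rtimes \mathbb{G}_m$, which \emph{is} linear; one must show this cannot happen under $\dim X \ge 2$ together with the product exclusion — presumably because on an affine variety of dimension $\ge 2$ a faithful $\operatorname{Aff}_1$-action forces, via the $\mathbb{G}_a$-part being a locally nilpotent derivation whose ``replicas'' (multiplying by invariants, using that the invariant ring is nontrivial as $\dim X \ge 2$) give new independent $\mathbb{G}_a$'s normalized by the torus, an infinite-dimensional $\Aut_{\alg}(X)$ — and the only way to avoid producing such replicas is the split situation $X \cong \mathbb{A}^1 \times Y$, which is precisely what we excluded. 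Making the replica construction and its $\mathbb{G}_m$-equivariance precise, and checking the resulting root subgroups are genuinely distinct as subgroups of $\Aut(X)$, is the technical core I would expect to spend the most effort on.
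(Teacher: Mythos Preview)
Your Step~2 is essentially right and matches the paper: once you have a $\mathbb{G}_m$ and a $\mathbb{G}_a$, you pass to a $\mathbb{G}_m$-homogeneous locally nilpotent derivation, and the exclusion of $X\cong\mathbb{A}^1\times Y$ is exactly what forces the existence of a $T$-semi-invariant $f\in\mathcal{O}(X)^U$ of non-zero weight, whence the replicas $f^k\cdot U$ are root subgroups with pairwise distinct weights. So the ``technical core'' you worry about is correctly located.

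The genuine gap is Step~3. You write that ``every $\mathbb{G}_a$-subgroup maps to a one-dimensional unipotent subgroup'' and that an infinite-dimensional abelian unipotent ind-subgroup cannot sit in $\GL_n(\K)$ because unipotent subgroups there have dimension at most $\binom{n}{2}$. But the hypothesized embedding $\Aut_{\alg}(X)\hookrightarrow\GL_n(\K)$ is only an embedding of \emph{abstract} groups, not of ind-groups or algebraic groups. As abstract groups, $\bigoplus_{k\ge 1} f^k\cdot U\cong(\K^{(\mathbb{N})},+)\cong(\K,+)$ (both are $\mathbb{Q}$-vector spaces of the same cardinality), so this infinite ``tower'' embeds already into $\GL_2(\K)$ via $s\mapsto\begin{pmatrix}1&s\\0&1\end{pmatrix}$. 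Dimension-counting inside $\GL_n$ is therefore powerless here; neither your option (a) nor (b) gives an obstruction.

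What the paper does instead --- and what your outline is missing --- is to exploit the \emph{semidirect product} structure $T\ltimes\bigoplus_k f^k\cdot U$, not just the unipotent piece. The abstract invariant that survives the passage to $\GL_n$ is the conjugation action of $T$: the kernel of $T$ acting on $f^k\cdot U$ is a finite cyclic group $\langle\xi_k\rangle$ whose order grows with $k$, and these centralizer relations are preserved by any abstract embedding. Inside $\GL_n$, after passing to a Borel and taking Zariski closures, one finds a genuine algebraic torus $\tilde T\subset\overline{\varphi(T)}^\circ$ acting on unipotent groups $V_k$ generated by $\tilde T$-conjugates of elements of $\varphi(f^k\cdot U)$; finite-dimensionality of $\GL_n$ forces the $\tilde T$-weights on $V_k$ and $V_{k+1}$ to coincide for large $k$, so a fixed $\varphi(\xi_k^s)$ centralizes both. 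Pulling back, $\xi_k^s$ would have to act trivially on $\K f$, contradicting the choice of $k$. In short: you have built the right group, but the non-linearity argument must go through the distinct $T$-weights (equivalently, the distinct finite kernels in $T$), not through unipotent dimension.
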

 
  If $X$ is isomorphic to $\mathbb{A}^1 \times Y$ for some affine variety $Y$, where $Y$ does not admit a faithful action of $\mathbb{G}_m$ or $\mathbb{G}_a$, then $\Aut_{\alg}(X)$ is isomorphic to $\mathcal{O}(Y)^* \ltimes \mathcal{O}(Y)^+$ which can be embedded into 
$\mathbb{K}(Y)^* \ltimes \mathbb{K}(Y)^+$ which in turn  embeds into $\overline{\mathbb{K}(Y)}^* \ltimes \overline{\mathbb{K}(Y)}^+ \simeq \mathbb{K}^* \ltimes \mathbb{K}^+$.

If $X$ admits no $\mathbb{G}_m$-action, but admits two non-commuting $\mathbb{G}_a$-actions, $\Aut(X)$ can be embedded into $\GL_n(\mathbb{K})$.
For example,  there exists an affine surface $X$ (see \cite[Example 4.1.3]{BD}) that has an automorphism group $\Aut(X) = \Aut_{\alg}(X) \simeq \mathbb{K}[x] \ast \mathbb{K}[y]$ which is linear by \cite[Theorem]{Mar} as additive groups $\mathbb{K}[x] \simeq \mathbb{K}[y]$ are isomorphic as abstract groups  to $\mathbb{G}_a$.  
 
 The main idea of the proof of Theorem \ref{main} is to find a subgroup of $\Aut_{\alg}(X)$  that is isomorphic to 
  a direct limit of  subgroups isomorphic to $\G_m \ltimes \G_a^r$, $r \ge 2$, where $\G_a^r$ is a direct product of root subgroups with respect to $\mathbb{G}_m$ of different weights, and show that such a group cannot be embedded into $\GL_n(\K)$.

As an application of Theorem \ref{main}  we obtain that 
the  automorphism group $\Aut(X)$ is isomorphic to a linear algebraic group as an abstract group if and only if the connected component $\Aut^\circ(X) \subset \Aut(X)$  is commutative (see Theorem \ref{main2}). Moreover, in this case $\Aut^\circ(X)$ is either the algebraic torus or a direct limit of commutative unipotent groups.

We denote by $\Bir(X)$  the group of birational transformations of $X$. It is well-known that such a group may be very large. For example the Cremona group $\Bir(\mathbb{A}^n) = \Bir(\mathbb{P}^n)$  for $n>1$
is known  to be very big, in particular, much larger than $\Aut(\mathbb{A}^n)$.  
Proposition \ref{Cremona} shows that the Cremona group $\Bir(\mathbb{A}^n) = \Bir(\mathbb{P}^n)$, $n>0$, is not isomorphic to the automorphism group of any affine variety. Moreover,
if $\Bir(X)$ is ``rich enough'', $\Bir(X)$ is also not isomorphic to the automorphism group of any affine variety. 
More precisely,
we prove the following result.

\begin{theorem}\label{main3}
Assume $\mathbb{K}$ is uncountable
and $X,Y$ are affine irreducible algebraic varieties. Assume $X$ is endowed with a rational action of a positive-dimensional linear algebraic group. Then the group of birational transformations $\Bir(X)$ is not isomorphic to $\Aut(Y)$.
\end{theorem}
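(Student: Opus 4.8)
The plan is to argue by contradiction --- suppose $\Bir(X)\cong\Aut(Y)$ for some affine variety $Y$ --- and to play the ind-group rigidity of $\Aut(Y)$ against the size that the rational action forces on $\Bir(X)$, the uncountability of $\K$ being what makes the two incompatible. First I would put the hypothesis in normal form: a positive-dimensional linear algebraic group contains a closed subgroup isomorphic to $\mathbb{G}_a$ or $\mathbb{G}_m$, and restricting the given rational action to such a subgroup and applying Rosenlicht's theorem on rational quotients shows that $X$ is birational to $\A^1\times Z$ with the subgroup acting through the first factor. Thus $\K(X)\cong K(t)$ with $K$ finitely generated over $\K$ of transcendence degree $\dim X-1=:d$, and $\Bir(X)$ contains $\PGL_2(K)$ acting on the coordinate $t$ with coefficients in $K$; in particular it contains the torus $T=\{t\mapsto st:s\in\K^{\ast}\}$, the root subgroups $\{t\mapsto t+f\}$ and $\{t\mapsto t/(1+ft)\}$ ($f\in K$), and, through $\Bir(Z)$, a subgroup normalising $\PGL_2(K)$.

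If $d=0$, then $X$ is a rational curve, $\Bir(X)=\PGL_2(\K)$ is a connected non-commutative linear algebraic group, and an isomorphism with $\Aut(Y)$ is excluded by Theorem~\ref{main2} --- which forces $\Aut^{\circ}(Y)$ to be commutative once $\Aut(Y)$ is abstractly a linear algebraic group --- when $\dim Y\ge 2$, and by the explicit classification of automorphism groups of affine curves when $\dim Y\le 1$. If $d\ge 1$, then $K$ is a finitely generated, non-algebraically-closed field of positive transcendence degree, hence --- because $\K$ is uncountable --- of uncountable cardinality, and $(K,+)$ is an abelian group of uncountable rank on which $T$ acts by scaling. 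When moreover $\K(X)$ is \emph{doubly ruled}, i.e. contains a purely transcendental subfield $M(x_1,x_2)$, one can do even better: the pairwise-commuting triangular transformations $\{(x_1,x_2)\mapsto(x_1,x_2+cx_1^{k}):c\in\K\}$, $k\in\ZZ$, together with $T'=\{(x_1,x_2)\mapsto(sx_1,x_2)\}$, produce inside $\Bir(X)$ a semidirect product $T'\ltimes V$ with $V$ a direct sum of root subgroups of infinitely many pairwise distinct $T'$-weights --- precisely the type of group that the proof of Theorem~\ref{main} shows cannot be linear, now with each root subgroup of uncountable rank. This last case already covers the Cremona groups, so it recovers Proposition~\ref{Cremona}.

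Transporting $\PGL_2(K)$ (or the richer $T'\ltimes V$) through the putative isomorphism places it inside $\Aut(Y)$, where it should clash with the structure recalled in Section~\ref{Ind-groups}: $\Aut(Y)=\bigcup_n G_n$ with each $G_n$ a finite-dimensional algebraic variety on which an algebraic torus has only finitely many weights, and, $Y$ being affine, $\Lie\Aut(Y)$ embeds into $\Der(\O(Y))$ and therefore has at most countable $\K$-dimension. A torus together with an abelian unipotent group of uncountable rank acting on it with nonzero weights, or a direct sum of root subgroups of infinitely many distinct weights, cannot be accommodated under these constraints. This is precisely where the uncountability of $\K$ enters: for countable $\K$ the field $K$ can, as an abstract group, be a dense subgroup of $\mathbb{G}_a(\K)$, and no contradiction arises --- compare the affine surface $S$ with $\Aut(S)\cong\K[t]$ mentioned earlier.

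The main obstacle is that an abstract subgroup of an ind-group isomorphic to $\mathbb{G}_m$ or to $\mathbb{G}_a$ need not be an algebraic subgroup, so after transport through the isomorphism one cannot simply read off weights or elements of $\Lie\Aut(Y)$. I would try to get around this by exploiting that the torus in question is the one produced by the original rational action on $X$ and that it normalises the relevant unipotent part: a $\mathbb{G}_m$-stable abelian subgroup of $\Aut(Y)$ on which $\mathbb{G}_m$ acts with nonzero weights should be forced to behave algebraically, so that a genuine weight decomposition survives inside $\Aut(Y)$ and the contradiction --- infinitely many distinct weights, or uncountable rank in a single weight, on a finite-dimensional $G_n$ --- can be run. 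Proving this regularisation statement in the generality needed, and supplying the correct substitute argument when the available torus produces only finitely many weights (which also calls for a careful analysis of $\Bir(X)$ via the maximal rationally connected fibration of $X$), is the technical crux, and the step on which I expect to spend most of the effort.
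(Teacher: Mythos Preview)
Your plan diverges substantially from the paper's, and the gap you yourself flag --- making the transported torus and its root subgroups behave algebraically inside $\Aut(Y)$ --- is precisely what the paper's argument is designed to \emph{avoid}. The paper never attempts to regularise a torus action or to count weights in $\Aut(Y)$.

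Instead, after reducing (via Matsumura, and via Proposition~\ref{Cremona} for the purely rational case) to $X$ birational to $\A^{l}\times Z$ with $Z$ positive-dimensional, the paper singles out a \emph{maximal commutative} subgroup $G\subset\Bir(\A^{l}\times Z)$ containing all maps $(x,z)\mapsto(f_{1}(z)x_{1},\dots,f_{l}(z)x_{l},z)$ with $f_{i}\in\K(Z)^{\ast}$, and verifies by a direct computation that $G$ equals its own centraliser. This is the surrogate for your regularisation step: a self-centralising subgroup maps to a \emph{closed} ind-subgroup of $\Aut(Y)$ (by \cite[Lemma~2.4]{LRU}), so \cite[Theorem~B]{CRX19} applies and the connected component $\varphi(G)^{\circ}$ is a union of commutative algebraic groups. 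The contradiction is then purely group-theoretic and uses divisibility (Section~\ref{divisibleelements}): since $[\varphi(G):\varphi(G)^{\circ}]$ is countable while the set of maps $(f_{1}(z)x_{1},x_{2},\dots,x_{l},z)$ with $f_{1}$ non-constant is uncountable, one such $g$ satisfies $\varphi(g)\in\varphi(G)^{\circ}$, hence some power $\varphi(g)^{k}$ is divisible in $\varphi(G)$; but $g^{k}$ is \emph{not} divisible in $G$, because an equation $s^{m}=f_{1}^{k}$ in $\K(Z)$ is obstructed by degree once $m>\deg(\text{numerator of }f_{1}^{k})$. No weights, no Lie algebras, no analysis of the ind-filtration of $\Aut(Y)$.

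So your weight-and-rank strategy is not wrong in spirit, but the regularisation problem you isolate is genuinely hard in this generality, and the paper simply sidesteps it. If you wish to salvage your route, note that the device doing the work in the paper is ``self-centralising $\Rightarrow$ closed image''; you might look for a self-centralising commutative subgroup adapted to your $T'\ltimes V$ rather than trying to force $T'$ itself to become an algebraic subtorus of $\Aut(Y)$.
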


\emph{Acknowledgements}: I would like to thank  Ievgen Makedonskyi for useful comments.

\section{Preliminaries}

\subsection{ Derivations and group actions}
Recall that $X$ is 
an irreducible affine algebraic variety.
A derivation $\delta$ is called \emph{locally finite} if it acts locally finitely on $\O(X)$, i.e., for any function $f \in \O(X)$ there is a  finite-dimensional vector subspace
$W \subset \O(X)$ such that $f \in W$ and  $W$ is stable under action of $\delta$.
A derivation $\delta \in \Der(\O(X))$ is called \emph{locally nilpotent} if for any function $f \in \O(X)$ there exists $k \in \mathbb{N}$ (which depends on $f$)
such that $\delta^k(f) = 0$. Note that
there is a one-to-one correspondence between 
locally nilpotent derivations on $\O(X)$ and
 $\G_a$-actions on 
$X$ given by the map $\delta\mapsto\{t\mapsto\exp(t\delta)\}$.

An element $u\in\Aut(X)$ is called \emph{unipotent} if $u=\exp(\partial)$ for some locally nilpotent derivation $\partial$.

\subsection{Ind-groups}\label{Ind-groups}

The notion of an ind-group goes back to \name{Shafarevich} who called it an infinite dimensional algebraic group (see \cite{Sh66}). 
We refer to \cite{FK}   for basic notions in this context.

\begin{definition}
By an affine \emph{ind-variety}  we understand an injective limit $V=\varinjlim V_i$ of an ascending sequence $V_0 \embed V_1 \embed V_2 \embed \ldots$ 
such that the following holds:
\begin{enumerate}[(1)]
\item $V = \bigcup_{k \in \NN} V_k$;
\item each $V_k$ is an affine algebraic variety;
\item for all $k \in \NN$ the embedding $V_k \embed V_{k+1}$ is closed in the Zariski topology.
\end{enumerate}
\end{definition}

For simplicity we will  call an affine ind-variety  simply an ind-variety.

An ind-variety $V$ has a natural \emph{topology}: a subset $S \subset V$ is called closed, resp. open, if $S_k := S \cap  V_k \subset V_k$ is closed, resp. open, for all $k \in \mathbb{N}$.  A  closed subset $S \subset V$ has a natural structure of an ind-variety and is called an ind-subvariety.  

A set theoretical product of ind-varieties admits a natural structure of an ind-variety. 
A \emph{morphism} between ind-varieties $V = \bigcup_m V_m$ and $W = \bigcup_l W_l$ is a map $\phi: V \to W$ such that for every $m \in \mathbb{N}$ there is an $l \in \mathbb{N}$ such that 
$\phi(V_m) \subset W_l$ and that the induced map $V_m \to W_l$ is a morphism of algebraic varieties. 
 This allows us to give the following definition.

\begin{definition}
An ind-variety $H$ is said to be an \emph{ind-group} if the underlying set $H$ is a group such that the map $H \times H \to H$, defined by  $(g,h) \mapsto gh^{-1}$, is a morphism of
ind-varieties.
\end{definition}

A \emph{closed subgroup}  $G$ of $H$ is a subgroup that is at the same time a closed subset. In this case $G$ is again an ind-group with respect to the induced ind-variety structure.
 A closed subgroup $G$ of an ind-group $H = \varinjlim H_i$ is called an \emph{algebraic subgroup} if $G$ is contained in $H_i$ for some $i$.

The next result can be found in   \cite[Section 5]{FK}.
\begin{proposition}\label{ind-group}
Let $X$ be an affine variety. Then $\Aut(X)$ has the structure of an ind-group such that  a regular action of an algebraic group $H$ on $X$ induces an ind-group homomorphism $H \to \Aut(X)$.
\end{proposition}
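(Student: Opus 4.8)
The plan is to realize $\Aut(X)$ as an increasing union of affine varieties indexed by a degree bound. First I would fix a closed embedding $X \embed \A^N$, so that $\O(X) = \K[x_1,\dots,x_N]/I$ acquires a notion of degree: for $f \in \O(X)$ let $\deg f$ be the least degree of a polynomial representing $f$, and write $\O(X)_{\le d}$ for the finite-dimensional space of functions of degree $\le d$. An endomorphism of $X$ is the same as a tuple $(f_1,\dots,f_N) \in \O(X)^N$ with $g(f_1,\dots,f_N)=0$ for every $g \in I$, and I would set $\End(X)_{\le d}$ to be those tuples with each $\deg f_i \le d$. Since the conditions $g(f_1,\dots,f_N)=0$ are polynomial in the coefficients of the $f_i$, each $\End(X)_{\le d}$ is a Zariski-closed subset of the finite-dimensional vector space $(\O(X)_{\le d})^N$, hence an affine variety; the inclusions $\End(X)_{\le d} \embed \End(X)_{\le d+1}$ are closed and their union is $\End(X)$. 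Composition $\End(X)_{\le d} \times \End(X)_{\le e} \to \End(X)_{\le de}$ is a morphism, because the coefficients of $\phi\circ\psi$ depend polynomially on those of $\phi$ and $\psi$, so $\End(X)$ is an ind-monoid.

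The decisive point is that $\Aut(X)$ is \emph{not} in general closed in $\End(X)$, since there is no a priori bound on $\deg\phi^{-1}$ in terms of $\deg\phi$. I would therefore bound the degree of an automorphism and its inverse simultaneously: embed $\Aut(X) \to \End(X) \times \End(X)$ by $\phi \mapsto (\phi, \phi^{-1})$ and define
$$\Aut(X)_{\le d} = \{\,(\phi,\psi) \in \End(X)_{\le d} \times \End(X)_{\le d} : \phi\circ\psi = \psi\circ\phi = \id\,\}.$$
Because composition is a morphism and $\{\id\}$ is a closed point of $\End(X)_{\le 1}$, the defining conditions are closed, so each $\Aut(X)_{\le d}$ is an affine variety; the inclusions are closed embeddings and the union is $\Aut(X)$. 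This equips $\Aut(X)$ with the structure of an ind-variety.

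Next I would verify the group axioms at the level of ind-varieties. The symmetry of the definition shows that $h \in \Aut(X)_{\le e}$ implies $h^{-1} \in \Aut(X)_{\le e}$, and composing the bounds shows that for $g \in \Aut(X)_{\le d}$, $h \in \Aut(X)_{\le e}$ one has $gh^{-1} \in \Aut(X)_{\le de}$. Moreover the map $\Aut(X)_{\le d}\times\Aut(X)_{\le e} \to \Aut(X)_{\le de}$, $(g,h)\mapsto gh^{-1}$, is a morphism of varieties, since in the ambient product of copies of $\End(X)$ each component of $(gh^{-1},(gh^{-1})^{-1})=(g\circ h^{-1},\,h\circ g^{-1})$ is obtained by applying the composition morphism to suitable projections. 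Hence $(g,h)\mapsto gh^{-1}$ is a morphism of ind-varieties and $\Aut(X)$ is an ind-group.

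Finally, for the functoriality statement let $H$ act regularly on $X$ via $a\colon H\times X\to X$, giving $\rho\colon H\to\Aut(X)$, $h\mapsto a(h,-)$. Writing $a$ in coordinates and expanding via $\O(H\times X)=\O(H)\otimes\O(X)$ as finite sums $a_i=\sum_j b_{ij}\otimes c_{ij}$, the $X$-functions $c_{ij}$ have degree at most some fixed $d$, so $\rho(h)\in\End(X)_{\le d}$ for all $h$; applying the same to $h^{-1}$ through the inversion morphism of $H$ bounds $\deg\rho(h)^{-1}=\deg\rho(h^{-1})$ as well, so $\rho$ lands in a single $\Aut(X)_{\le d}$. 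Since the coefficients $b_{ij}(h)$ and $b_{ij}(h^{-1})$ are regular in $h$, the induced map $H\to\Aut(X)_{\le d}$ is a morphism, whence $\rho$ is an ind-group homomorphism. The main obstacle throughout is precisely the absence of control on the degree of inverses; the simultaneous filtration by $\deg\phi$ and $\deg\phi^{-1}$, together with the closedness of the relation $\phi\circ\psi=\id$, is the device that circumvents it. One may additionally check independence of the chosen embedding, but this is not needed for the existence assertion.
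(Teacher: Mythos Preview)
Your argument is correct and is exactly the standard construction. Note, however, that the paper does not give its own proof of this proposition at all: it simply records the statement and refers to \cite[Section~5]{FK} (Furter--Kraft). What you have written is essentially a condensed version of the construction carried out there---the degree filtration via a closed embedding $X\hookrightarrow\A^N$, the realization of $\Aut(X)$ inside $\End(X)\times\End(X)$ by $\phi\mapsto(\phi,\phi^{-1})$ to gain simultaneous control on $\deg\phi$ and $\deg\phi^{-1}$, and the observation that composition is polynomial in coefficients.

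One small remark: in your last paragraph you implicitly use $\O(H\times X)=\O(H)\otimes\O(X)$, which is immediate when $H$ is affine. For a general algebraic group this requires a word (e.g.\ reduce to the affine part via Chevalley, or note that the image of $H$ in $\Aut(X)$ is automatically a linear algebraic group since a complete group acts trivially on an affine variety). This is a routine point and not a gap in the strategy.
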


\subsection{Root subgroups}\label{rootsubgroups}
	In this section we describe  {\it root subgroups} of
	$\Aut(X)$ for a given affine variety $X$ with respect to a subtorus.
	
	\begin{definition}
		Let $T$ be a subtorus in $\Aut(X)$, i.e. a closed
		algebraic subgroup isomorphic to a torus.  A closed algebraic subgroup
		$U \subset \Aut(X)$ isomorphic to $\mathbb{G}_a$ is called a {\it root
			subgroup} with respect to $T$ if the normalizer of $U$ in
		$\Aut(X)$ contains $T$.
		
		Since $\mathbb{G}_a$ contains no non-trivial closed normal subgroups, every non-trivial regular action is faithful. Hence, such an algebraic
		subgroup $U$ corresponds a non-trivial {\it normalized} $\mathbb{G}_a$-action on $X$,
		i.e.\,a $\mathbb{G}_a$-action on $X$ whose image in $\Aut(X)$ is normalized
		by $T$.
	\end{definition}
	
	Assume $U\subset \Aut(X)$ is a root subgroup with respect to $T$. Since
	$T$ normalizes $U$, we can define an action
	$\varphi\colon T\rightarrow \Aut(U)$ of $T$ on $U$ given by
	$t.u=t\circ u\circ t^{-1}$ for all $t\in T$ and $u\in U$.
	Moreover, since $\Aut(U)\simeq \mathbb{G}_m$, such an action corresponds to a
	character of the torus $\chi\colon T\rightarrow \mathbb{G}_m$, which does not
	depend on the choice of isomorphism between $\Aut(U)$ and
	$\mathbb{G}_m$. This character is
	called the \emph{weight} of $U$. The algebraic subgroups $T$ and $U$ generate an algebraic subgroup in $\Aut(X)$	isomorphic to $\mathbb{G}_a\rtimes_\chi T$.

 Consider a nontrivial algebraic action of $\mathbb{G}_a$ on $X$, given by $\lambda \colon \mathbb{G}_a \to \Aut(X)$. If $f \in \mathcal{O}(X)$ is $\mathbb{G}_a$-invariant, then the \emph{modification} $f \cdot \lambda$ of $\lambda$ is defined in the    following way:
$$(f \cdot \lambda)(r)x =\lambda(f(x)r)x$$ 
for $r \in \mathbb{C}$ and $x \in X$.
 This is again a $\mathbb{G}_a$-action.  
 It is not difficult to see that if $X$ is irreducible and $f  \neq 0$, then $f \cdot \lambda$  and $\lambda$ have the same invariants.
If $U \subset \Aut(X)$ is a closed algebraic subgroup isomorphic to $\mathbb{G}_a$ and if $f \in \mathcal{O}(X)^U$ is a $U$-invariant, then in a similar way we define  the modification $f \cdot U$ of $U$. Pick an isomorphism
$\lambda \colon \mathbb{G}_a \to U$ and set 
$$f \cdot U = \{ (f \cdot \lambda)(r) \mid r \in \mathbb{G}_a \}.$$ 

\subsection{Divisible elements}\label{divisibleelements}
We call an element $f$ in a group $G$  \emph{divisible by $n$} if there exists an element $g \in G$ such
that $g^n = f$. An element is called \emph{divisible} if it is divisible by all $n \in \mathbb{Z}^+$. 
If $G$ is an agebraic group, then by  \cite[Lemma 3.12]{LRU}
for any $f \in G$ there exist $k > 0$ that depends on $f$ such
that $f^k$ is a divisible element.

%%%%%%%%%%%%%%%%%%%%%%%%%%%%%%%%%%%%
\section{Proof of Theorem \ref{main}}
%%%%%%%%%%%%%%%%%%%%%%%%%%%%%%%%%%%%
The following lemma is  well known  and appeared in similar form in \cite[Lemma 3.1]{FZ-uniqueness}. 
\begin{lemma}\label{l:hom-lnd}
Assume that $\g$ is $\ZZ^r$-graded for $r>0$ and
consider a locally finite element $z\in\g$ that does not belong to the zero component $\g_0$. Then 
there exists a locally nilpotent homogeneous component of $z$ of non-zero weight.
\end{lemma}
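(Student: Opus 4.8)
The statement to prove is Lemma~\ref{l:hom-lnd}: given a $\ZZ^r$-graded Lie algebra $\g = \bigoplus_{w \in \ZZ^r} \g_w$ and a locally finite element $z \in \g$ with nonzero component outside $\g_0$, there is a locally nilpotent homogeneous component of $z$ of nonzero weight. The plan is to reduce the $\ZZ^r$-grading to a $\ZZ$-grading, then exploit the interaction between the local finiteness of $z$ and the decomposition of $\ad z$ into graded pieces.

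First I would choose a group homomorphism (a ``generic'' linear functional) $\ell \colon \ZZ^r \to \ZZ$ that is injective on the finite set $\Supp(z) = \{ w : z_w \neq 0\}$ of weights appearing in $z$; such an $\ell$ exists since this support is finite. Pushing the grading forward along $\ell$ gives a $\ZZ$-grading $\g = \bigoplus_{n \in \ZZ} \g^{(n)}$ in which the homogeneous components of $z$ are exactly the original $\g_w$-components, now indexed by $\ell(w) \in \ZZ$, and by hypothesis at least one of these has $\ell(w) \neq 0$. So it suffices to prove the lemma for $r = 1$: a locally finite $z = \sum_n z_n$ not concentrated in degree $0$ has a locally nilpotent homogeneous component of nonzero degree.

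For the $\ZZ$-graded case, write $z = z_- + z_0 + z_+$ where $z_+ = \sum_{n>0} z_n$ and $z_- = \sum_{n<0} z_n$, and consider the extreme nonzero components. Suppose $z_+ \neq 0$ and let $m$ be the top degree occurring, so $z_m \neq 0$. The key computation is that $\ad z$ is locally finite on $\g$ (since $z$ is locally finite, meaning $\ad z$ acts locally finitely), and that iterating $\ad z$ against a homogeneous element and reading off the top-degree part shows $(\ad z_m)^k$ is, up to adding strictly-lower-degree terms, the top graded piece of $(\ad z)^k$; local finiteness of $\ad z$ then forces $\ad z_m$ to be locally nilpotent on each homogeneous component, hence on all of $\g$. (If instead $z_+ = 0$ then $z_- \neq 0$ by hypothesis, and one runs the symmetric argument with the bottom degree.) Concretely: for homogeneous $y$, the element $(\ad z)^k y$ has its component in degree $\deg y + km$ equal to $(\ad z_m)^k y$; since the span of $\{(\ad z)^k y\}_k$ is finite-dimensional, only finitely many degrees appear, so $(\ad z_m)^k y = 0$ for $k$ large. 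Thus $z_m$ is a locally nilpotent homogeneous component of nonzero weight, and unwinding the reduction, so is the corresponding original component of $z$.

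The main obstacle is making the ``top-degree part of $(\ad z)^k$ equals $(\ad z_m)^k$ plus lower terms'' step fully rigorous and correctly handling the bookkeeping of degrees — in particular ensuring that the finite-dimensionality of $\Span\{(\ad z)^k y : k \ge 0\}$ really does bound the number of distinct degrees $\deg y + km$ that can occur, which it does precisely because $m \neq 0$ makes these degrees pairwise distinct. A secondary point to be careful about is that local nilpotence of $\ad z_m$ on each homogeneous component of $\g$ upgrades to local nilpotence on all of $\g$: this is automatic because every element of $\g$ lies in a finite sum of homogeneous components and $\ad z_m$ raises degree by the fixed amount $m$, so an element's image stays within a finite band of degrees on which $\ad z_m$ is nilpotent. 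Everything else is routine.
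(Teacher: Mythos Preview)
Your argument is correct and is essentially the paper's approach with the details filled in: the paper takes a nonzero vertex of the convex hull $P(z)$ of $\Supp(z)$, which is exactly a weight maximizing some linear functional $\ell$, i.e.\ your top degree $m$ after pushing forward to a $\ZZ$-grading. One small point to tighten: choose $\ell$ injective on $\Supp(z)\cup\{0\}$ (not just on $\Supp(z)$) so that the nonzero weight is guaranteed to land at a nonzero integer, and note that local nilpotence on homogeneous elements extends to all of $\g$ simply because any $y$ is a finite sum of homogeneous pieces, each killed by some power of $\ad z_m$.
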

\begin{proof}
Let us take the convex hull $P(z)\subset \ZZ^r\otimes\QQ$ of component weights of $z$.
Then for any non-zero vertex $v\in P(z)$ the corresponding homogeneous component is locally nilpotent.
The details are left to the reader.
\end{proof}

\begin{proof}[Proof of Theorem \ref{main}]
Assume first that $\Aut_{\alg}(X)$ contains a copy of $\G_a$ and $T=\G_m$. Let $\partial$ be a locally niplotent derivation corresponding to $\G_a$. By Lemma \ref{l:hom-lnd} there is a locally nilpotent derivation $\tilde{\partial} \in \Der \mathcal{O}(X)$ that is normalised by $T$. Denote by $U$ a $\G_a$-action on $X$ that corresponds to $\tilde{\partial}$. Hence, $T$ acts on the ring of invariants $\mathcal{O}(X)^U$.
\begin{claim}
There is a $T$-semi-invariant $f \in \mathcal{O}(X)^U$ of non-zero weight.
\end{claim}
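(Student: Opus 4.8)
The plan is to argue by contradiction. Work with the $\ZZ$-grading $\O(X)=\bigoplus_{n\in\ZZ}\O(X)_n$ induced by the $T$-action, in which $\O(X)_n$ is the space of $T$-semi-invariants of weight $n$ and $\O(X)_0=\O(X)^T$. Since $U$ is a root subgroup with respect to $T$, the locally nilpotent derivation $\tilde\partial$ is $T$-homogeneous, say $\tilde\partial\bigl(\O(X)_n\bigr)\subseteq\O(X)_{n+d}$ with $d\in\ZZ$ the weight of $U$ (up to sign); hence $A:=\O(X)^U=\ker\tilde\partial$ is a graded subring of $\O(X)$, and what we must produce is a nonzero homogeneous element of $A$ of nonzero degree. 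Assume, to the contrary, that $A\subseteq\O(X)_0$.

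The elementary observation behind the argument is: if $0\neq h\in\O(X)_m$ with $m\neq 0$ then $h\notin A$, so for $k\ge 1$ minimal with $\tilde\partial^{\,k}(h)=0$ the element $\tilde\partial^{\,k-1}(h)$ is a nonzero member of $A$ of degree $m+(k-1)d$, which by our assumption must vanish. If $d=0$ this forces $m=0$, contradicting $m\neq 0$ — so when $d=0$ the assumption is already untenable and the claim holds; we may therefore assume $d\neq 0$. Then $m=-(k-1)d$ with $k\ge 2$, so every weight occurring in $\O(X)$ is an integer multiple of $d$ lying on one fixed side of $0$. As $\O(X)$ is a domain this set of weights is a sub-semigroup of $\ZZ$; as $T$ acts faithfully on $X$ it generates $\ZZ$ as a group; together these force $d=\pm 1$. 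Reversing the grading if necessary (which affects neither $T$ nor $U$ nor $A$), we reach a non-negatively graded ring $\O(X)=\bigoplus_{n\ge 0}\O(X)_n$ with $\O(X)_0=A=\ker\tilde\partial$ and $\tilde\partial$ of degree $-1$; here $A=\O(X)^T$ is finitely generated (as $T$ is reductive), $Y:=\Spec A$ is an affine variety with $\dim Y=\dim X-1$, and in particular $\O(X)_1\neq 0$.

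It remains to contradict this rigid picture, and this is where I expect the main work to lie. The non-negative grading means that the $T\cong\G_m$-action on $X$ satisfies $\lim_{t\to 0}t\cdot x\in Y=X^{T}=\Spec A$ for every $x\in X$, giving a retraction $X\to Y$ (the morphism dual to $A\hookrightarrow\O(X)$); choosing $0\neq h\in\O(X)_1$ and setting $a:=\tilde\partial(h)\in A\setminus\{0\}$, the function $h/a$ is a slice over the principal open $X_a$, so $X_a\cong\AA^1\times\Spec\bigl(A[a^{-1}]\bigr)$ $T$-equivariantly. The plan is to promote this to a global $T$-equivariant isomorphism $X\cong\AA^1\times Y$, with $T$ acting by weight $1$ on the first factor and trivially on $Y$, and then to invoke the hypothesis of Theorem~\ref{main} on $X$: either $Y$ admits no faithful action of a positive-dimensional connected algebraic group, and we contradict that hypothesis directly; or $Y$ carries such an action, by a group $H$, and then — extending $H$ to $X=\AA^1\times Y$ through the second factor (so that $H\subseteq\Aut_{\alg}(X)$), and noting that the translation subgroup of the $\AA^1$-factor has invariant ring exactly $\O(Y)$ — a one-parameter subgroup of $H$ acts non-trivially on that invariant ring and, in the $\G_m$-case, immediately supplies a semi-invariant of nonzero weight, contradicting the assumption (the $\G_a$-subcase requires a short additional reduction). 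The main obstacle is thus the passage from the local slice over $X_a$ to the global product decomposition of $X$, together with the bookkeeping needed to extract the desired semi-invariant in the second alternative.
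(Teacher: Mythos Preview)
Your weight computation is correct and gives a cleaner route than the paper to the intermediate conclusion: the paper instead compares transcendence degrees of $\O(X)^{U}$ and $\O(X)^{T\ltimes U}$, argues that $T\ltimes U$ acts with one-dimensional orbits, and deduces $\O(X)^{T}=\O(X)^{U}$ by a separate orbit argument. Your direct analysis of the degrees of $\tilde\partial$-iterates yields the non-negative grading $\O(X)=\bigoplus_{n\ge 0}\O(X)_n$ with $\O(X)_0=\O(X)^U=\O(X)^T$ and $\tilde\partial$ of degree $-1$ more transparently. But from there the proposal has two genuine gaps.

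First, the passage from the local slice over $X_a$ to a global isomorphism $X\cong\AA^1\times Y$ is not carried out --- you label it the ``main obstacle'' and leave it as a plan. A slice over a principal open set does not by itself yield a global product; one needs either a global slice (i.e.\ some $h\in\O(X)_1$ with $\tilde\partial(h)$ a unit in $A$) or an external input. The paper closes this by showing that the fibers of the quotient $X\to X/\!\!/T=X/\!\!/U$ are one-dimensional and each contain a closed $U$-orbit isomorphic to $\AA^1$, then invoking \cite[Proposition~3.9.1]{Kr} to obtain the global trivialization. Second, your handling of the alternative ``$Y$ carries a faithful $H$-action'' does not contradict the standing assumption. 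Extending $H$ to $X=\AA^1\times Y$ through the second factor and finding an $H$-semi-invariant of nonzero weight in $\O(Y)=\O(X)^U$ says nothing about $T$: in your own description $T$ acts trivially on $\O(Y)$, so no element of $\O(Y)$ can be a $T$-semi-invariant of nonzero weight, and the assumption $A\subseteq\O(X)^T$ remains unviolated. At best this line shows the Claim would hold after replacing $T$ by a one-dimensional torus inside $H$ --- possibly enough for the downstream proof of Theorem~\ref{main}, but not a proof of the Claim as stated for the given $T$. (The paper, for its part, simply writes ``which contradicts the assumption on $X$'' after producing $X\cong\AA^1\times Y$ and does not separately discuss this alternative.)
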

Assume towards a contradiction that all invariants from $ \mathcal{O}(X)^U$ are also $T$-invariants.
This implies that  
\begin{equation}\label{TU}
\mathcal{O}(X)^{U} = \mathcal{O}(X)^{T \ltimes U} = (\mathcal{O}(X)^{U})^T.
\end{equation}
This is possible only if $T \ltimes U$ acts with at most one-dimensional orbits since otherwise, if $T \ltimes U$ acts with a two-dimensional orbit, then by \cite[Theorem 11.1.1.(7)]{FK} the quotient field of $\mathcal{O}(X)^{T\ltimes U}$ has transcendence  degree $\dim X -2$ which contradicts the fact that the transcendence  degree of the quotient field of $\mathcal{O}(X)^{U}$ is $\dim X -1$. Since $T \ltimes U$ is a connected algebraic group, these orbits are irreducible  which are then isomorphic to $\mathbb{A}^1$ as $U$-orbits are closed subvarieties isomorphic to $\mathbb{A}^1$, see \cite[Theorem 11.1.1]{FK}.
We claim that $\mathcal{O}(X)^{T} =\mathcal{O}(X)^{U}$. Indeed, by \eqref{TU} we have the inclusion
$\mathcal{O}(X)^{U} \subset \mathcal{O}(X)^{T}$ and assume towards a contradiction that there is $f \in \mathcal{O}(X)^{T} \setminus \mathcal{O}(X)^{U}$.
In another words, $f(t.x) = f(x)$ for all $x \in X$, $t \in T$, but there exists $x \in X$ such that $f(u.x) \neq f(x)$, where $u \in U$ is a non-trivial element. Hence, $U.x \simeq \mathbb{A}^1$. The torus $T$ acts on $U.x$ since otherwise $T \ltimes U.x$ would be two-dimensional. Moreover, $T$ acts on $U.x$ non-trivially since otherwise for any $y \in U.x$, $f(\overline{T.y}) = f(y)$ which implies that $f(tut^{-1}.y) = f(u.y)$. But this contradicts the fact that $U$ acts on $U.x$ transitively since $tut^{-1} \neq u$. Hence, there is a quotient morphism
\[
X \to X /\!\!/ T = X /\!\!/ U  
\]
Its fibers are at least one-dimensional and contain a unique closed $T$-orbit. Hence, fibers are one-dimensional and  moreover, are isomorphic to $U$. By \cite[Proposition 3.9.1]{Kr}
$X$ is isomorphic to $\mathbb{A}^1 \times Y$, where $Y$ is an affine variety isomorphic to $X /\!\!/ T \simeq X /\!\!/ U$ which contradicts the assumption on $X$.

\vspace*{3mm}

Therefore, 
$\{  f^k \cdot U \subset \Aut(X) \mid k \in \mathbb{N} \}$ are root subgroups with respect to $T$ with different weights. Without loss of generality we can assume that $U$ is a root subgroup with respect to $T$ of non-zero weight since otherwise we can just replace $U$ by $f \cdot U$.

\begin{claim}\label{claimnotlinear}
The subgroup \[
G=T \ltimes (\bigoplus_{k \ge 1} f^k \cdot U) \subset \Aut(X)
\]
is not linear.
\end{claim}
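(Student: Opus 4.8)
The plan is to argue by contradiction: suppose $G = T \ltimes \bigl(\bigoplus_{k \ge 1} f^k \cdot U\bigr)$ embeds into $\GL_n(\K)$ for some $n$. Write $A = \bigoplus_{k \ge 1} f^k \cdot U$ for the unipotent part; it is a countable-dimensional $\K$-vector space (each $f^k \cdot U \simeq \G_a$) on which $T = \G_m$ acts with weights $w + k$, $k \ge 1$, where $w \ne 0$ is the weight of $U$ (recall we arranged $U$ itself has non-zero weight). In particular $T$ acts on $A$ with infinitely many distinct characters, and $A$ is the direct sum of the corresponding weight lines $A_k = f^k \cdot U$.

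First I would pass to a finite-dimensional faithful representation and extract the key tension. Under an embedding $\rho\colon G \hookrightarrow \GL_n(\K)$, consider the Zariski closure $H = \overline{\rho(T)} \subset \GL_n(\K)$; since $\rho(T)$ is a one-parameter subgroup, $H$ is a commutative algebraic group, and after replacing it by the closure we may diagonalize: $\K^n$ decomposes into finitely many weight spaces for $H$. Each element $a \in A \setminus \{e\}$ maps to a unipotent element $\rho(a)$, and the conjugation action of $\rho(T)$ on $\rho(a)$ reproduces the $\G_m$-action on $A$ with the prescribed weight. The crucial point is that $\rho(A)$ is an abelian subgroup of $\GL_n(\K)$ consisting of unipotent elements, hence (simultaneously triangularizable) it sits inside a unipotent algebraic subgroup $N \subset \GL_n(\K)$; moreover $\log$ identifies $\rho(A)$ with a $\K$-subspace $\mathfrak{a} \subset \mathfrak{n} \subset \gl_n(\K)$, which is finite-dimensional. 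But $\log$ is $T$-equivariant (conjugation by $\rho(T)$ corresponds to the adjoint action, which is linear on $\mathfrak{n}$), so $\mathfrak{a}$ is a $T$-submodule of $\gl_n(\K)$ that surjects onto the infinite direct sum $\bigoplus_{k \ge 1} A_k$ — contradicting finite-dimensionality, since the $A_k$ have pairwise distinct $T$-weights and hence are linearly independent in any $T$-equivariant image.

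To make the above rigorous the main technical steps are: (i) verify that $\rho(A)$ really consists of unipotent matrices — this follows because each $f^k \cdot U$ is a $\G_a$ whose elements are unipotent in $\Aut(X)$, but "unipotent in $\Aut(X)$" does not a priori force unipotent in $\GL_n(\K)$, so instead I would use that $\rho(A)$ is a torsion-free divisible abelian group (each $f^k\cdot U \simeq (\K,+)$ is divisible, and the direct sum of divisible groups is divisible) all of whose elements lie in $\GL_n(\K)$; a divisible torsion-free abelian subgroup of $\GL_n(\K)$ need not be unipotent, so I must be more careful — the cleaner route is (ii): use the $T$-action. Since $T = \G_m$ normalizes $A$ and acts on $A_k$ with weight $w+k$, pick $k$ with $w + k \ne 0$; then in $\GL_n(\K)$ the element $\rho(t) \rho(a) \rho(t)^{-1}$, for $a \in A_k$ and $t \in T$, equals $\rho$ of the element of $A_k$ scaled by $t^{w+k}$. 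Taking $\rho(t)$ to a diagonalizable element and letting its eigenvalue-ratios vary, the orbit $\{\,\rho(t) \rho(a)\rho(t)^{-1} : t \in T\,\}$ together with $e$ is a one-dimensional $T$-stable subvariety through $e$; its tangent direction at $e$ is an eigenvector of $\operatorname{Ad}(\rho(t))$ with eigenvalue $t^{w+k}$. For distinct $k$ these eigenvalues are distinct characters of $T$, so the corresponding tangent vectors in $\gl_n(\K)$ span an infinite-dimensional subspace — impossible. I expect step (i)/(ii), namely nailing down precisely why $\rho(A)$ (or its tangent data) is both genuinely infinite-dimensional over $\K$ \emph{and} lives inside the finite-dimensional $\gl_n(\K)$ in a $T$-equivariant way, to be the main obstacle: one must be scrupulous that the $T$-semisimplicity is preserved under $\rho$ (which holds because $\rho$ restricted to the algebraic group $T \ltimes (f^k\cdot U) \simeq \G_m \ltimes \G_a$ is a morphism of algebraic groups, by rigidity of such representations) and that the weights genuinely remain pairwise distinct after embedding. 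Once that equivariance is in hand, the contradiction is immediate from comparing $\dim_\K \gl_n(\K) = n^2 < \infty$ against the infinitely many distinct $T$-weights $\{w + k : k \ge 1\}$ appearing in the image.
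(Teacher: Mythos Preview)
Your argument has a genuine gap at the point you yourself flag as the main obstacle. The assertion that ``$\rho$ restricted to the algebraic group $T \ltimes (f^k\cdot U) \simeq \G_m \ltimes \G_a$ is a morphism of algebraic groups, by rigidity of such representations'' is false: there is no such rigidity for solvable groups. For instance, pre-composing the standard embedding $\G_m \ltimes \G_a \hookrightarrow \GL_2(\K)$ with any nontrivial field automorphism of $\K$ yields an abstract group embedding that is not a morphism of varieties. Consequently you cannot conclude that $\operatorname{Ad}(\rho(t))$ acts on the tangent direction coming from $A_k$ with eigenvalue $t^{w+k}$; the ``weights'' you want to count are characters of $T$ as an \emph{abstract} group and need not survive as algebraic characters of any torus inside $\GL_n(\K)$. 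Your first route via $\log$ fails for the same reason: $\log\rho(A)$ is only a $\QQ$-subspace of $\gl_n(\K)$, not a $\K$-subspace, and $\dim_{\QQ}\gl_n(\K)$ is large enough to accommodate it, so no contradiction arises without the weight decomposition---which again needs the unjustified algebraicity.

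The paper circumvents this by never assuming $\varphi$ is algebraic. Instead it tracks specific \emph{torsion} elements $\xi_k \in T$: the relation ``$\xi_k$ centralises $f^k\cdot U$'' is purely group-theoretic and is preserved by $\varphi$, hence $\varphi(\xi_k)$ centralises the Zariski closure $\overline{\varphi(f^k\cdot U)}$. One then passes to an honest algebraic torus $\tilde{T}\subset\overline{\varphi(T)}^{\circ}$ (which exists because $\varphi(T)$ contains infinitely many torsion elements), and uses that the unipotent groups $V_k=\langle\tilde T.u_k\rangle$ have only finitely many possible $\tilde{T}$-weight sets inside $\GL_n(\K)$. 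For large $k$ this forces $\varphi(\xi_k^s)$ to centralise both $V_k$ and $V_{k+1}$, whence $\xi_k^s$ acts trivially on $\K f$, contradicting the choice of $\xi_k$. The torsion elements are precisely the device that lets one transport weight information through an abstract isomorphism; your proposal is missing an analogue of this step.
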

Indeed,  assume towards a contradiction that the subgroup $G \subset \Aut(X)$ is linear, i.e., there is an embedding $\varphi\colon G \to \GL_n(\K)$. Since $G$ is solvable, $G$ can be embedded (after a necessarily conjugation) into a Borel subgroup $B \subset \GL_n(\K)$  of upper triangular matrices. Moreover,  the commutator $[G,G] = \bigoplus_{k \ge 1} f^k \cdot U$ embedds into $[B,B]$. In other words 
$\varphi(\bigoplus_{k \ge 1} f^k \cdot U)$ is a subgroup of the unipotent radical of $B$.

Consider the closed subgroup $\overline{\varphi(T)}^\circ \ltimes \overline{\varphi(f^k \cdot U)} \subset B \subset \GL_n(\K)$.  The subgroup
$\overline{\varphi(f^k \cdot U)} = \overline{\varphi(f^k \cdot U)}^\circ \subset [B,B]$  is  unipotent and $\overline{\varphi(T)} \subset B$ is an algebraic subgroup. Hence, $\overline{\varphi(T)}^\circ \subset \overline{\varphi(T)}$ is a finite index subgroup which implies that $\overline{\varphi(T)}^\circ$ contains infinitely many  elements of finite order of $\varphi(T)$. As a consequence, $\overline{\varphi(T)}^\circ$  contains a copy of algebraic subtorus  of positive dimension.
Pick a big enough $k \in\mathbb{N}$ such that 
  the kernel of $T$-action on $f^k \cdot U$ is
$\langle \xi_k  \rangle$, where  $\xi_k$ is an element of  order
bigger than the index $s=[\overline{\varphi(T)}:\overline{\varphi(T)}^\circ]$  and 
    $\xi_k$  acts on  $\mathbb{K}f$  non-trivially.
Hence, $\xi_k^{s} \in \overline{\varphi(T)}^\circ$ and since $k$ is chosen to be big enough,  we have that
\begin{equation}\label{non-trivially}
    \xi_k^{s} \text{ acts on } \mathbb{K}f \text{ non-trivially.}
\end{equation}
Since   
$\varphi(\xi_k^{s})$ centralizes $\varphi(f^k \cdot U)$, $\varphi(\xi_k^{s})$ centralizes $\overline{\varphi(f^k \cdot U)}$ too.
Choose a  subtorus of $\overline{\varphi(T)}^\circ$ which we denote by $\tilde{T}$ that contains $\varphi(\xi_k^s)$. Pick  $u_k \in \varphi(f^k \cdot U)$ and consider
the unipotent subgroup $V_k =\langle \tilde{T}.u_k \rangle = \langle tu_k t^{-1} \mid t \in \tilde{T} \rangle \subset \GL_n(\mathbb{K})$. 
Note that $\tilde{T}$ normalizes $V_k$.
Hence, the unipotet group $V_k$ is a direct product of root subgroups with respect to $\tilde{T}$.
The kernel of $\tilde{T}$-action on $V_k$ contains $\langle \varphi(\xi_k^s) \rangle$. 
Since $\GL_n(\mathbb{K})$ is an algebraic group, i.e., is finitely dimensional, for a big enough $k$, the weights of all root subgroups  of $V_k$ with respect to $\tilde{T}$ are the same as the weights of the root subgroups of $V_{k+1} = \langle \tilde{T}.u_{k+1} \rangle$, where $u_{k+1} \in \varphi(f^{k+1}\cdot U) \subset [B,B] \subset \GL_n(\K)$. Hence, $\langle \varphi(\xi_k^s) \rangle$ acts trivially on $V_{k+1}$.
As a consequence, $\langle \xi_k^s \rangle$ acts trivially on $\varphi^{-1}(V_k) \subset f^k \cdot U$ and on  $\varphi^{-1}(V_{k+1}) \subset f^{k+1} \cdot U$. Therefore,  $\langle \xi_k^s \rangle$ acts trivially on $\overline{\varphi^{-1}(V_{k})} \subset f^k \cdot U$ and on $\overline{\varphi^{-1}(V_{k+1})} \subset f^{k+1} \cdot U$ which implies that $\langle \xi_k^s \rangle$ acts trivially on  $\mathbb{K}f$.  This contradicts \eqref{non-trivially} which proves the theorem if $X$ admits $\mathbb{G}_m$- and $\mathbb{G}_a$-actions.

 If $X$ admits two non-commuting $\mathbb{G}_m$-actions, then by Lemma \ref{l:hom-lnd} $X$ admits a $\mathbb{G}_a$-action and the claim of the theorem follows from above.
\end{proof}

\begin{remark}
As it is already mentioned in the introduction, it is proved in \cite{Cor} that  $\Aut(\mathbb{A}^2)$ is not linear, i.e., it cannot be embedded into $\GL_n(\mathbb{K})$ for any $n \in \mathbb{N}$. This also follows from Theorem \ref{main}. Moreover, in \cite[Proposition 2.3]{Cor} it is proved that there is a countably generated subgroup of the subgroup
\[\J = \{ (ax+c, by + f(x)) | a,b \in \mathbb{C}^*, c \in \mathbb{C}, f(y) \in \mathbb{C}[x] \} \subset \Aut(\mathbb{A}^2)\]
that is not linear. We note that by the Jung-Van der Kulk Theorem (see \cite{Ju42} and \cite{Kul53}) $\Aut(\mathbb{A}^2)$ is the amalgamated product of $\J$ and the group of affine transformations $\Aff_2$ of $\mathbb{A}^2$ along their intersection $C$, i.e.,
\begin{equation}\label{formula}
\Aut(\mathbb{A}^2) = \Aff_2 \ast_C \J.
\end{equation}
Using such an amalgamated product we claim that any representation of a subgroup \[
\SAut(\mathbb{A}^2) = \bigg\{ (f,g) \in \Aut(\mathbb{A}^2) \mid  \jac(f) = \det \begin{pmatrix}
\frac{\partial f}{\partial x} & \frac{\partial f}{\partial y} \\
\frac{\partial g}{\partial x} & \frac{\partial g}{\partial y}
\end{pmatrix} = 1 \bigg\}
\]
is trivial, i.e., any homomorphism $\varphi\colon \SAut(\mathbb{A}^2) \to \GL_n(\mathbb{K})$ is trivial. To show this we first note that the amalgamated product structure of $\Aut(\mathbb{A}^2)$ induces the amalgamated product structure of $\SAut(\mathbb{A}^2)$. More precisely, $\SAut(\mathbb{A}^2)$ is the amalgamated product of the group $\SAff_2$ of special affine transformations of $\mathbb{A}^2$ and the subgroup 
\[\J^s = \{ (ax+c, by + f(x)) | a,b \in \mathbb{C}^*, ab=1, c \in \mathbb{C}, f(y) \in \mathbb{C}[x] \} \subset \Aut(\mathbb{A}^2).\]
  By \cite[Proposition 2.3]{Cor} there is no embedding of $\J^s$ into $\GL_n(\mathbb{K})$. Hence, there is a non-identity element $g \in \J^s$ such that $g$ is the kernel of $\varphi$. Therefore, the normal subgroup that contains $g$ is also in the kernel of $\varphi$. But by \cite{FL} any normal subgroup that contains
$g$ coincides with $\SAut(\mathbb{A}^2)$ which proves the claim.

Moreover, any group homomorphism $\psi\colon \Aut(\mathbb{A}^2) \to \GL_n(\mathbb{K})$ factors through the homomorphism $\jac\colon \Aut(\mathbb{A}^2) \to \mathbb{G}_m$.
  Indeed, similarly as above, there is $g \in \J^s$ that is in the kernel of $\psi$. Hence, by the same argument as above the normal subgroup generated by $g$ contains $\SAut(\mathbb{A}^2)$ and the claim follows.
\end{remark}

\section{Automorphism group that is isomorphic to a linear algebraic group}
We begin this section with the lemma that is used in the proof of Theorem \ref{main2}.
\begin{lemma}\label{A2}
Let 
$\tilde{U},\tilde{V} \subset \Aut(\mathbb{A}^2)$ be
two  one-dimensional unipotent subgroups that act on $\mathbb{A}^2$ with different generic orbits. Then
\begin{enumerate}[(1)]
\item\label{(1)} the subgroup $G_{\tilde{U}} \subset \Aut(\mathbb{A}^2)$ generated by all one-dimensional unipotent subgroups that have the same generic orbits as $\tilde{U}$ coincides with its centralizer;
\item\label{(2)} the subgroup generated by 
$G_{\tilde{U}}$ and $G_{\tilde{V}}$ cannot be presented as a finite product of $G_{\tilde{U}}$ and $G_{\tilde{V}}$.
\end{enumerate}
\end{lemma}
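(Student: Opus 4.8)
The statement concerns $\Aut(\mathbb{A}^2)$ with its amalgamated-product structure $\Aut(\mathbb{A}^2) = \Aff_2 \ast_C \J$ from the Jung--Van der Kulk theorem. A one-dimensional unipotent subgroup $\tilde U$ is a $\mathbb{G}_a$-action, and up to a coordinate change its generic orbits are the fibers of a coordinate projection; the subgroups $\tilde U$ and $\tilde V$ having different generic orbits means, after suitable conjugation, that one may take the generic orbits of $G_{\tilde U}$ to be horizontal lines and those of $G_{\tilde V}$ to be a transverse family. The plan is to identify $G_{\tilde U}$ with the "de Jonquières"-type subgroup fixing a pencil of lines --- concretely, after conjugating so that the generic orbits of $\tilde U$ are $\{y = \mathrm{const}\}$, the subgroup $G_{\tilde U}$ consists of all automorphisms of the form $(x,y) \mapsto (x + g(y), y)$ with $g \in \K[y]$, i.e. $G_{\tilde U} \cong \K[y]^+ \cong \O(\mathbb{A}^1)^+$, which is abelian.

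For part \eqref{(1)}: since $G_{\tilde U}$ is already abelian, $G_{\tilde U} \subseteq Z(G_{\tilde U})$; for the reverse inclusion I would take $\varphi \in \Aut(\mathbb{A}^2)$ commuting with every element of $G_{\tilde U}$ and argue that $\varphi$ must preserve the foliation by the common generic orbits $\{y = c\}$ (it permutes the orbits and, commuting with the whole group $\K[y]^+$ acting on each leaf by translation, it must preserve each leaf and act on it compatibly with all translations), hence $\varphi$ is of the form $(x,y) \mapsto (x + h(y), \psi(y))$ with $\psi$ an automorphism of $\mathbb{A}^1$; then commuting with the non-constant-slope elements forces $\psi = \mathrm{id}$, so $\varphi \in G_{\tilde U}$. (Alternatively one can read this off the Bass--Serre action on the tree of the amalgam: $G_{\tilde U}$ stabilizes a point and its centralizer cannot be larger by acylindricity-type considerations.)

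For part \eqref{(2)}: suppose for contradiction that $\langle G_{\tilde U}, G_{\tilde V}\rangle = G_{\tilde U} G_{\tilde V} \cdots G_{\tilde U} G_{\tilde V}$ is a finite alternating product of bounded length $N$. I would use the action of $\Aut(\mathbb{A}^2)$ on the Bass--Serre tree $\mathcal T$ associated to \eqref{formula}: each $G_{\tilde U}$, $G_{\tilde V}$ is conjugate into $\J$ (being unipotent it cannot meet the affine factor outside $C$ in the relevant way), hence fixes a vertex $v_{\tilde U}$, resp. $v_{\tilde V}$, of $\mathcal T$, and since the orbits differ these vertices are distinct. A finite product of vertex stabilizers of $v_{\tilde U}$ and $v_{\tilde V}$ can only move a given vertex a bounded distance (at most $N \cdot d(v_{\tilde U}, v_{\tilde V})$) in $\mathcal T$; but $\langle G_{\tilde U}, G_{\tilde V}\rangle$ contains hyperbolic elements of unbounded translation length (e.g. products that realize increasingly complicated reduced words in the amalgam, giving automorphisms of arbitrarily large degree), so it cannot be written as a bounded product.

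The main obstacle I anticipate is making the reduction "unipotent subgroup with prescribed generic orbit $\Rightarrow$ conjugate into the de Jonquières subgroup $\J$, fixing a specific vertex of $\mathcal T$" fully rigorous, since a priori a one-dimensional unipotent subgroup of $\Aut(\mathbb{A}^2)$ need not be a "standard" shear --- one must invoke the structure theory of $\mathbb{G}_a$-actions on $\mathbb{A}^2$ (Rentschler's theorem: every $\mathbb{G}_a$-action on $\mathbb{A}^2$ is conjugate to a triangular one) to pin down $G_{\tilde U}$ up to conjugacy, and then check that all the one-dimensional unipotent subgroups sharing its generic orbits really do land in the same conjugate of $\J$ and generate exactly $\O(\mathbb{A}^1)^+$. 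Once that normal form is in hand, both \eqref{(1)} and \eqref{(2)} follow from the tree action as sketched.
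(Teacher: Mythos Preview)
Your proposal is correct and follows essentially the same route as the paper: normalize $G_{\tilde U}$ to the triangular subgroup $\J_u=\{(x,y)\mapsto(x,\,y+f(x))\}$ (the paper cites Serre's result on algebraic subgroups of an amalgam, while you invoke Rentschler's theorem; these give the same normal form), verify directly that $\J_u$ equals its own centralizer, and for part~\eqref{(2)} exploit the amalgamated product $\Aut(\mathbb{A}^2)=\Aff_2\ast_C\J$. The only difference is presentational: the paper dispatches \eqref{(2)} in one line (``follows from the amalgamated product structure'' once one knows $G_{\tilde V}\not\subset\J$), whereas you spell out the Bass--Serre tree argument with the explicit displacement bound, which is precisely how one would make that line rigorous.
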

\begin{proof}
Recall that the group $\Aut(\mathbb{A}^2)$ has the amalgamated product structure $\Aff_2 \ast_{C} \J$, see \eqref{formula}. 
 By \cite{Sr80} any closed algebraic subgroup of $\Aut(\mathbb{A}^2)$
 is conjugate to one of the factors $\Aff_2$ or $\J$. Since  $G_{\tilde{U}} \subset \Aut(\mathbb{A}^2)$ is infinite-dimensional, $G_{\tilde{U}}$ is conjugate to a subgroup of $\J$. Moreover, since $G_{\tilde{U}}$ is infinite-dimensional, commutative and consists of unipotent elements, $G_{\tilde{U}}$ is conjugate to a subgroup   of
 \[ \J_u= \{ (x, y + f(x)) \mid f(y) \in \mathbb{C}[x] \} \subset \Aut(\mathbb{A}^2).
 \] 
 Since $G_{\tilde{U}}$  is generated by all one-dimensional unipotent subgroups
that have the same generic orbits,  $G_{\tilde{U}}$ is conjugate to the whole $\J_u$. 
It is easy to check that $\J_u \subset \Aut(\mathbb{A}^2)$ coincides with its centralizer which proves \eqref{(1)}. 

Without loss of generality we can assume that $G_{\tilde{U}} = \J_u$. Since by \eqref{(1)}  $G_{\tilde{V}}$ does not commute with $G_{\tilde{U}} =\J_u$, $G_{\tilde{V}}$ is not a subgroup of $\J_u$ and since $G_{\tilde{V}}$ is infinite-diemnsional, $G_{\tilde{V}}$ is not a subgroup of $\J$. Hence,  \eqref{(2)} follows from amalgamated product structure of $\Aut(\mathbb{A}^2)$.
\end{proof}

\begin{theorem}\label{main2}
Let $X$ be an affine variety.
If $\Aut(X)$ is isomorphic to a linear algebraic group as an abstract group, then the connected component $\Aut^\circ(X)$ is commutative. Moreover, in this case $\Aut^\circ(X)$ is either the algebraic torus or a direct limit of commutative unipotent groups.
\end{theorem}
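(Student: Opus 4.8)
The plan is to settle the statement for $\Aut_{\alg}(X)$ first and only then transfer it to $\Aut^\circ(X)$. We may assume $\dim X\ge 2$: for an affine curve $\Aut(X)$ is itself an algebraic group and the assertion is immediate, the sole exception being $X\cong\mathbb A^1$, where $\Aut^\circ(X)=\Aff_1$ is non-commutative, which we therefore tacitly exclude. Assume $\Aut(X)$ is isomorphic, as an abstract group, to a linear algebraic group, and fix an embedding $\Aut(X)\hookrightarrow\GL_N(\K)$; then every subgroup of $\Aut(X)$ embeds into $\GL_N(\K)$, so in particular $\Aut_{\alg}(X)$ is linear.

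First I would dispose of the exceptional case of Theorem~\ref{main}, i.e.\ $X\cong\mathbb A^1\times Y$ with $Y$ admitting no faithful action of a positive-dimensional connected algebraic group. There $\Aut_{\alg}(X)\simeq\mathcal{O}(Y)^*\ltimes\mathcal{O}(Y)^+$, and since $\dim Y\ge 1$ the derived subgroup of this group equals $\mathcal{O}(Y)^+$, an infinite-dimensional $\K$-vector space which the subgroup $\mathbb G_m=\K^*\subset\mathcal{O}(Y)^*$ normalises, acting on it by scalar multiplication. Arguing as in Claim~\ref{claimnotlinear} — the subgroup $\K^*\ltimes\mathcal{O}(Y)^+$ has no proper finite-index subgroup, hence after conjugation lies in a Borel subgroup $B$ of $\GL_N(\K)$; its derived subgroup $\mathcal{O}(Y)^+$ then lands in the unipotent radical of $B$; passing to Zariski closures and exploiting that the relevant copy of $\mathbb G_m$ acts there through a single (weight-one) character — one is forced to conclude that $\mathcal{O}(Y)^+$ embeds $\K$-linearly into a finite-dimensional $\K$-vector space, which is absurd. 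So this case does not occur, and Theorem~\ref{main} is applicable to $X$.

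Now the main dichotomy, on $\Aut_{\alg}(X)$. If $\Aut_{\alg}(X)$ contains a copy of $\mathbb G_m$ and is non-commutative, Theorem~\ref{main} says it cannot be linear, a contradiction; hence whenever $\mathbb G_m\subset\Aut_{\alg}(X)$, the group $\Aut_{\alg}(X)$ is commutative. Such a commutative $\Aut_{\alg}(X)$ cannot contain $\mathbb G_a$ either: otherwise Lemma~\ref{l:hom-lnd} lets us choose the corresponding locally nilpotent derivation normalised by $T=\mathbb G_m$, with associated root subgroup $U$ of weight zero (as $T$ is now central), and then the first Claim in the proof of Theorem~\ref{main} — valid because $X$ is not of the exceptional form — produces a $T$-semi-invariant $f\in\mathcal{O}(X)^U$ of non-zero weight, so that $f\cdot U\subset\Aut_{\alg}(X)$ is a root subgroup of non-zero weight, contradicting centrality of $T$. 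Thus $\Aut_{\alg}(X)$ then contains no $\mathbb G_a$, so every connected algebraic subgroup of it is a torus, and being a commutative directed union of subtori of $\Aut(X)$ (all of rank $\le\dim X$) it is a single algebraic torus. If instead $\Aut_{\alg}(X)$ contains no $\mathbb G_m$, then every connected algebraic subgroup of it is unipotent; were $\Aut_{\alg}(X)$ non-commutative it would contain two non-commuting one-parameter unipotent subgroups, i.e.\ $X$ would carry two non-commuting $\mathbb G_a$-actions (necessarily with distinct general orbits, since $\mathbb G_a$-actions with the same general orbits commute), and one would then have to show that the resulting group $\SAut(X)\subset\Aut_{\alg}(X)$ is not isomorphic to any linear algebraic group — a contradiction. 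Hence $\Aut_{\alg}(X)$ is commutative, and being generated by pairwise commuting commutative unipotent algebraic subgroups it is a direct limit of commutative unipotent groups; the remaining possibility — $\Aut_{\alg}(X)$ containing neither $\mathbb G_m$ nor $\mathbb G_a$ — gives the trivial group, covered by both alternatives.

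Finally, since $\Aut_{\alg}(X)\subset\Aut^\circ(X)$ in all cases, it remains to invoke that $\Aut^\circ(X)$ is generated by its connected algebraic subgroups, i.e.\ $\Aut^\circ(X)=\Aut_{\alg}(X)$ — a property of automorphism ind-groups I would take from \cite{FK} — whereupon all the above conclusions pass to $\Aut^\circ(X)$. The step I expect to be the main obstacle is the non-commutative unipotent case of the previous paragraph: for an arbitrary affine variety $X$ of dimension $\ge 2$ carrying two non-commuting $\mathbb G_a$-actions, proving that $\SAut(X)$ (equivalently $\Aut_{\alg}(X)$) is not isomorphic, as an abstract group, to a linear algebraic group. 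Unlike for $\mathbb A^2$ there is no amalgamated-product presentation of $\Aut(X)$ to lean on; the argument will have to extract from the abundance of $\mathbb G_a$-actions a group-theoretic feature forbidden in linear algebraic groups — a suitable free product or amalgam, infinite transitivity of the action on $X$, or the failure of an appropriate finiteness property — keeping in mind that a connected linear algebraic group is never a non-trivial free product, and that the group $\mathbb K[x]\ast\mathbb K[y]$ appearing in the introduction, while linear, is not a linear algebraic group. Secondary points needing care are the passage to Zariski closures inside a Borel in the second paragraph, and the precise reference for $\Aut^\circ(X)=\Aut_{\alg}(X)$.
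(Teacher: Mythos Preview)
Your proposal has two genuine gaps, one you acknowledge and one you do not.

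The unacknowledged gap is in your second paragraph: the claim that $\K^*\ltimes\mathcal O(Y)^+$ is not linear is simply false, and the paper's introduction says so explicitly --- $\mathcal O(Y)^*\ltimes\mathcal O(Y)^+$ embeds into $\overline{\K(Y)}^*\ltimes\overline{\K(Y)}^+\simeq\K^*\ltimes\K^+\subset\GL_2(\K)$. The flaw in your reasoning is the phrase ``the relevant copy of $\mathbb G_m$ acts there through a single (weight-one) character'': under an abstract embedding $\varphi$, the image $\varphi(\K^*)$ need not be an algebraic torus, and the induced map $\K^*\to\K^*$ on a weight line can be a wild field automorphism of $\K$; nothing forces the embedding of $\mathcal O(Y)^+$ into the unipotent radical to be $\K$-linear. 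So you cannot dispose of $X\simeq\mathbb A^1\times Y$ by linearity alone. The paper instead shows directly that $\Aut(\mathbb A^1\times Y)$ is not isomorphic to an algebraic group, by transporting the torus $T\subset\Aut(\mathbb A^1\times Y)$ back to $G$, locating an algebraic subtorus $\tilde T\subset\overline{\varphi^{-1}(T)}$, and using divisibility to constrain $\varphi(\tilde T)$; one then produces inside $G$ algebraic subgroups of arbitrarily large dimension, contradicting finite-dimensionality of $G$.

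The acknowledged gap --- the non-commutative unipotent case --- is the heart of the proof, and the paper's solution is exactly the device you dismiss. Two non-commuting $\mathbb G_a$'s inside a unipotent $W\subset\Aut(X)$ force a two-dimensional $W$-orbit $O$, and such an orbit is isomorphic to $\mathbb A^2$ by \cite[Theorem~11.1.1]{FK}. One then restricts to $O$ and \emph{does} use the amalgamated-product structure of $\Aut(\mathbb A^2)$: the maximal commutative subgroups $H_1,H_2\subset\Aut(X)$ containing $\mathcal O(X)^{\tilde U}\!\cdot\tilde U$ and $\mathcal O(X)^{\tilde V}\!\cdot\tilde V$ restrict on $O$ to the groups of Lemma~\ref{A2}, and that lemma shows $\langle H_1|_O,H_2|_O\rangle$ is not a finite product $H_1|_O\cdot H_2|_O\cdots$. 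On the other hand $\varphi^{-1}(H_1),\varphi^{-1}(H_2)$ are closed in the algebraic group $G$ (since $H_i$ equals its own centralizer), so the subgroup they generate \emph{is} such a finite product --- contradiction. Your remark that ``unlike for $\mathbb A^2$ there is no amalgamated-product presentation of $\Aut(X)$ to lean on'' misses that one can always locate a copy of $\mathbb A^2$ inside $X$ as a unipotent orbit and work there.

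A smaller point: the equality $\Aut^\circ(X)=\Aut_{\alg}(X)$ you invoke at the end is delicate, and the paper does not use it. Instead it splits on whether $G^\circ$ is commutative (then a finite-index subgroup of $\Aut(X)$ is commutative, hence so is $\Aut^\circ(X)$, and \cite[Theorem~B]{CRX19} gives the structure), or $G^\circ$ is non-commutative, in which case the arguments above produce the contradiction directly.
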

\begin{proof}[Proof of Theorem \ref{main2}]
Assume $\varphi\colon G \to \Aut(X)$ is an isomorphism of abstract groups, where $G$ is a  linear algebraic group. 
If the connected component $G^\circ \subset G$ is commutative, the finite index subgroup of $\Aut(X)$ is commutative. This implies that the connected component $\Aut^\circ(X)$ is commutative.
Hence,  by \cite[Theorem B]{CRX19} (see also \cite[ Corollary 3.2]{RvS21})
$\Aut(X)^\circ$ is a union of commutative algebraic groups.  The group $\Aut(X)^\circ$ either does not contain unipotent subgroups and in this case $\Aut^\circ(X)$ is isomorphic to an algebraic torus  or $\Aut(X)^\circ$ contains unipotent subgroups.  
By Theorem \ref{main} in the  later  case either $\Aut(X)$ does not contain a copy of $\mathbb{G}_m$ which implies that $\Aut^\circ(X)$ is the union of unipotent algebraic subgroups or
$X$ is isomorphic to $\mathbb{A}^1 \times Y$ for some affine variety $Y$. 
But  $\Aut(\mathbb{A}^1 \times Y)$ is non-commutative. This proves the theorem in case $G^\circ$ is commutative.

We  assume now that $G^\circ$ is non-commutative. 
In this case 
 $G$ contains closed connected commutative subgroups $U$ and $V$  that do not commute and $U$ normalizes $V$. Indeed, if $G$ is non-unipotent, it contains a maximal subtorus $T \subset G$ and a root subgroup  normalized but not centralized by $T$.
 If
$G$ is unipotent,  then $G$ is nilpotent, i.e., 
\begin{equation}
    G = G_0 \rhd G_1 \rhd \dots \rhd G_n = \{  \id \},
\end{equation}
where $G_{i+1} = [G,G_i]$, $[G,G_i] = \{ ghg^{-1}h^{-1} \mid g \in G, h \in G_i    \}$. In particular, $G_{n-1}$ is a subgroup of the center of $G$. Moreover, for any $H \subset G_{n-2} \setminus G_{n-1}$ isomorphic to $\mathbb{G}_a$, the group $V=H \times G_{n-1}$ is commutative. Choose a subgroup $U \subset G \setminus V$  isomorphic to $\mathbb{G}_a$ that does not commute with $V$. Note that such $U$ exists as $G$ is non-commutative. Moreover, we claim that $U$ normalizes $V$. Indded,  $[U,V] \subset [G,G_{n-2}] =G_{n-1}$ which means that $uvu^{-1}v^{-1} \in G_{n-1}$ for any $u \in U$, $v \in V$.  Hence, $uvu^{-1} \in G_{n-1} v \subset V$ which proves the claim.

Hence, $\overline{\varphi(U)}^\circ, \overline{\varphi(V)}^\circ \subset \Aut(X)$ are  closed connected commutative subgroups. 
Since $\varphi(U)$ normalizes $\varphi(V)$, $\varphi(U)$ normalizes 
$\overline{\varphi(V)} \subset \Aut(X)$ and hence   $\overline{\varphi(U)}^\circ$ normalizes 
$\overline{\varphi(V)}^\circ$. 
By \cite[Theorem B]{CRX19} (see also \cite[Corollary 3.2]{RvS21}) $\overline{\varphi(U)}^\circ, \overline{\varphi(V)}^\circ \subset \Aut(X)$ are unions of algebraic subgroups and hence $\overline{\varphi(U)}^\circ \ltimes \overline{\varphi(V)}^\circ$ is a union of algebraic subgroups. Note that $\overline{\varphi(U)}^\circ$ and $\overline{\varphi(V)}^\circ$ do not commute
since otherwise the subgroups $\varphi^{-1}(\overline{\varphi(U)}^\circ)$ and 
$\varphi^{-1}(\overline{\varphi(V)}^\circ)$ of $G$  would commute which is not the case as $\varphi^{-1}(\overline{\varphi(U)}^\circ) \cap U \subset U$ is a dense subgroup and analogously $\varphi^{-1}(\overline{\varphi(U)}^\circ) \cap U \subset U$ is a dense subgroup.
Therefore, there are non-commuting algebraic subgroups in $\Aut(X)$.
\begin{claim}
$\Aut_{\alg}(X) \subset \Aut(X)$ does not contain a copy of $\mathbb{G}_m$.
\end{claim}

 Since $\Aut_{\alg}(X)$ is not commutative,   by Theorem \ref{main}
 $\Aut_{\alg}(X)$ can contain a copy of $\mathbb{G}_m$ only if  $X$ is isomorphic to a product $\mathbb{A}^1 \times Y$, where $Y$ is an affine variety that does not admit a regular action of a positive-dimensional algebraic group. So, assume $X \simeq \mathbb{A}^1 \times Y$ and $Y \subset \mathbb{A}^l$ is a closed subset. The automorphism group $\Aut(\mathbb{A}^1\times Y)$ has the following form:
  \begin{eqnarray}\nonumber
 \{ (x,y_1,\dots,y_l) \mapsto & 
 (g(y)x + h(y), h_1(x,y),\dots,h_l(x,y)) \mid y=(y_1,\dots,y_l) \in Y, \; \\ & 
 g(y) \in \mathcal{O}(Y)^*, h \in \mathcal{O}(Y), h_i\in  \mathcal{O}(\mathbb{A}^1 \times Y) \},
 \label{structureofaut}
 \end{eqnarray}
 where $i = 1,\dots,l$ and the map $Y \to Y, \; (y_1,\dots,y_l) \mapsto (h_1(x,y),\dots,h_l(x,y))$  is an automorphism for each $x \in \mathbb{A}^1$. Consider
 the one-dimensional algebraic subtorus
 \begin{align*}
T= \{ (x,y_1,\dots,y_l) \mapsto  (ax, y_1,\dots,y_l) \mid \;   a \in \mathbb{K}^* \} \subset \Aut(\mathbb{A}^1 \times Y)
 \end{align*}
and
 the subgroup 
  \begin{align*}
 \Aut^u(\mathbb{A}^1 \times Y) = \{ (x,y_1,\dots,y_l) \mapsto  (x + h(y), y_1,\dots,y_l) \mid  \;  h \in \mathcal{O}(Y)  \} \subset \Aut(\mathbb{A}^1 \times Y).
 \end{align*}
 
 Any automorphism of $\Aut(\mathbb{A}^1 \times Y)$ that commutes with each element of $T$ has the form 
 \begin{equation}
     (x,y_1,\dots,y_l) \mapsto  (g(y)x, h_1(x,y),\dots,h_l(x,y)),
 \end{equation}
 where $g\in \mathcal{O}(Y)^*$ and $h_i$ does not depend on the first coordinate.
 This follows from the following  equality
 \begin{align*}
(g(y)x, h_1(x,y),\dots,h_l(x,y))  =
 (a^{-1}x, y_1,\dots,y_l) & \circ (g(y)x, h_1(x,y),\dots,h_l(x,y)) \circ \\ (ax, y_1,\dots,y_l) = 
 &
 (g(y)x, h_1(ax,y),\dots,h_l(ax,y)).
 \end{align*}

Now, since $T$ acts on $\Aut^u(\mathbb{A}^1 \times Y)$ by conjugations, $\overline{\varphi^{-1}(T)}$ acts on $\overline{\varphi^{-1}(\Aut^u(\mathbb{A}^1 \times Y))}$  by conjugations. Note that $\overline{\varphi^{-1}(T)}$ contains an algebraic subtorus $\tilde{T}$ of positive dimension as $\varphi^{-1}(T)$ contains infinitely many elements of finite order. The image of $\tilde{T}$ does not contain elements of the form
 \[
 (x,y_1,\dots,y_l) \mapsto  (g(y)x, y_1,\dots,y_l),
 \]
 where $g(y) \in \mathcal{O}(Y)^*\setminus \mathbb{K}$ because such  elements are not divisible by high enough positive integer and all elements of the algebraic torus are divisible (see Section \ref{divisibleelements}).

 The subgroup $\tilde{T} \subset \overline{\varphi^{-1}(T)}$ acts on $\overline{\varphi^{-1}(\Aut^u(\mathbb{A}^1 \times Y))}$ by conjugations and for each 
 \begin{align*}
u_n = \{ (x,y_1,\dots,y_l) \mapsto  (x + h_n(y), y_1,\dots,y_l) \mid  \;  h_n \in \mathcal{O}(Y) \} \subset \Aut(\mathbb{A}^1 \times Y),
 \end{align*}
 where $\{ h_n  \}$ form a basis of the vector space $\mathcal{O}(Y)$,  the  subgroup $\langle \tilde{T}.\varphi^{-1}(u_n) \rangle = \langle t   \varphi^{-1}(u_n) t^{-1} \mid t \in \tilde{T} \rangle$  is an algebraic subgroup of $G$ as the orbit $\tilde{T}.\varphi^{-1}(u_n)= \{ t   \varphi^{-1}(u_n) t^{-1} \mid t \in \tilde{T} \} \subset G$ is constructible.  Moreover, by \eqref{structureofaut} and because each element of $\tilde{T}$ has the form 
 \begin{equation}
     (x,y_1,\dots,y_l) \mapsto  (ax, h_1(y),\dots,h_l(y)),
 \end{equation}
  the subgroup generated by $\langle \tilde{T}.\varphi^{-1}(u_n) \rangle$  is a subgroup of $\varphi^{-1}(u_n \cdot \Tr)$, where
  \begin{align*}
 \Tr=  \{ (x,y_1,\dots,y_l) \mapsto  (x + b, y_1,\dots,y_l) \mid  \;  b \in \mathbb{K} \} \subset \Aut(\mathbb{A}^1 \times Y).
 \end{align*}
Hence, for any natural $k$
 the subgroup generated by $\langle \tilde{T}.\varphi^{-1}(u_n) \rangle,\dots, \langle \tilde{T}.\varphi^{-1}(u_{n+k}) \rangle$ is an algebraic subgroup of dimension at least $k$.
 But this is not possible as $G$ is an algebraic group, i.e., is finite-dimensional. We arrive to the contadiction which proves the claim.

  The semidirect product $\overline{\varphi(U)}^\circ \ltimes \overline{\varphi(V)}^\circ$ is the  union of  unipotent subgroups and in particular it contains a unipotent subgroup $W$ that acts on $X$ with a two-dimensional orbit $O$ that is isomorphic to $\mathbb{A}^2$, see \cite[Theorem 11.1.1]{FK}.   
  Pick subgroups $\tilde{U} \subset W$ and $\tilde{V} \subset W$  isomorphic to $\mathbb{G}_a$ that generate the algebraic subgroup that acts with a two-dimensional orbit $O \simeq \mathbb{A}^2$. 
 Take the maximal commutative subgroups $H_1$ and $H_2$ of $\Aut(X)$ that contain $\mathcal{O}(X)^{\tilde{U}} \cdot \tilde{U}$ and  $\mathcal{O}(X)^{\tilde{V}} \cdot \tilde{V} \subset \Aut(X)$ respectively. Therefore, $\varphi^{-1}(H_1), \varphi^{-1}(H_2) \subset G$ are closed subgroups. Hence, the subgroup $H \subset G$ generated by $\varphi^{-1}(H_1)$ and $\varphi^{-1}(H_2)$ can be presented as a finite product of subgroups 
$\varphi^{-1}(H_1)$ and $\varphi^{-1}(H_2)$. On the other hand, the subgroup of $\Aut(X)$ generated by $H_1$ and $H_2$ cannot be presented as a finite product of subgroups $H_1$ and $H_2$. Indeed,  by Lemma \ref{A2}\eqref{(1)}
the restriction of $H_1$ to 
$O \simeq \mathbb{A}^2$ is the subgroup of $\Aut(O \simeq \mathbb{A}^2)$ generated by all one-dimensional unipotent subgroups
that have the same generic orbits as $\tilde{U}\mid_{O}$. Analogous situation we have with $H_2$. Now by Lemma \ref{A2}\eqref{(2)}, the subgroup
$H = \langle H_1,H_2 \rangle \subset \Aut(X)$ restricted to $O \simeq \mathbb{A}^2$ cannot be presented as  a finite product of $H_1$ and $H_2$ restricted to $O$.
 We arrive to the contradiction and finish the proof. 
%3. 
\end{proof}

\section{Proof of Theorem \ref{main3}}
We start this section with the next proposition. 
\begin{proposition}\label{Cremona}
Assume $\mathbb{K}$ is uncountable
and $X$ is a connected affine variety. Then $\Aut(X)$ is not isomorphic to  the Cremona group $\Bir(\mathbb{A}^n) = \Bir(\mathbb{P}^n)$ as an abstract group for any $n >0$.
\end{proposition}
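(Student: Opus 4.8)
The plan is to reduce the statement to the case $n=1$ first, and then handle general $n$ by producing inside $\Bir(\mathbb{A}^n)$ a subgroup of the shape forbidden by the ind-group structure of $\Aut(X)$. For $n=1$ we have $\Bir(\mathbb{A}^1)=\PGL_2(\K)$, which is a simple algebraic group (modulo center, but $\PGL_2$ is already center-free). The key point is that $\PGL_2(\K)$ over an uncountable field is not countable and is its own commutator, yet it is a \emph{simple} abstract group that is divisible on a large subset and contains $\mathbb{G}_m$ and $\mathbb{G}_a$ and two non-commuting copies of $\mathbb{G}_a$; so if it were $\Aut(X)\simeq\Aut_{\alg}(X)$ with $\dim X\ge 2$, Theorem~\ref{main} would give non-linearity, but $\PGL_2(\K)\subset\GL_3(\K)$ is linear --- contradiction --- unless $\Aut(X)$ fails the hypotheses of Theorem~\ref{main}. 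So I would instead argue directly: $\PGL_2(\K)$ is an algebraic group, and if $\Aut(X)\simeq\PGL_2(\K)$ as an abstract group then $\Aut(X)$ would have to be ``small'', and one checks this forces $\dim X\le 1$, then rules out the remaining low-dimensional cases by hand (curves have $\Aut$ either finite, or $\mathbb{G}_a\rtimes\mathbb{G}_m$, or $\PGL_2$ only for $\mathbb{P}^1$ which is not affine, and $\mathbb{A}^1$ has $\Aut=\mathbb{G}_a\rtimes\mathbb{G}_m\not\simeq\PGL_2$).

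For general $n>0$, the cleanest route is to exhibit in $\Bir(\mathbb{A}^n)$ an abstract subgroup that \emph{cannot} occur as a closed subgroup, or even as an abstract subgroup with the right properties, in any $\Aut(Y)$, using the ind-group structure (Proposition~\ref{ind-group}) together with Theorem~\ref{main}. The Cremona group $\Bir(\mathbb{P}^n)$ contains $\Bir(\mathbb{P}^1)=\PGL_2(\K)$ acting on the first coordinate (via the inclusion $\Bir(\mathbb{P}^1)\hookrightarrow\Bir(\mathbb{P}^n)$, $\tau\mapsto\tau\times\mathrm{id}$), and it also contains $\PGL_{n+1}(\K)=\Aut(\mathbb{P}^n)$; moreover it contains copies of $\mathbb{G}_m$ and non-commuting unipotent one-parameter subgroups. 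The plan is: if $\Bir(\mathbb{A}^n)\simeq\Aut(Y)$ for some affine $Y$, then $\Aut(Y)$ contains a copy of $\PGL_2(\K)$ hence of $\mathbb{G}_m$ and of two non-commuting $\mathbb{G}_a$'s inside $\Aut_{\alg}(Y)$; then $\Aut_{\alg}(Y)$ is non-commutative and contains $\mathbb{G}_m$, so by Theorem~\ref{main} either $\Aut_{\alg}(Y)$ is not linear --- but we need a contradiction from that --- or $\dim Y\le 1$ or $Y\simeq\mathbb{A}^1\times Z$. The non-linearity of $\Aut_{\alg}(Y)$ is not by itself a contradiction; so instead I would use the uncountability hypothesis to derive a contradiction with the \emph{ind-group} structure: in $\Bir(\mathbb{A}^n)$ one has an uncountable chain of pairwise non-conjugate behaviours, or more concretely, $\Bir(\mathbb{A}^n)$ is not a countable union of algebraic varieties in a way compatible with Proposition~\ref{ind-group} while simultaneously satisfying the Zariski-closedness of the relevant subgroups. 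Concretely, I would use that $\Aut(Y)=\varinjlim\Aut(Y)_k$ with each $\Aut(Y)_k$ an affine variety, and show $\Bir(\mathbb{A}^n)$ admits a subgroup isomorphic to the ``bad'' group $T\ltimes\bigoplus_{k\ge 1}f^k\cdot U$ of Claim~\ref{claimnotlinear} which is not linear, together with enough rigidity that it would have to sit in a single $\Aut(Y)_k$ --- impossible.

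More precisely, here is the argument I would try to make rigorous. Since $\mathbb{K}$ is uncountable, $\Bir(\mathbb{P}^n)$ contains, for every field element $\lambda\in\mathbb{K}$, the map $(x_1,\dots,x_n)\mapsto(x_1+\lambda x_2 x_3\cdots,\dots)$ type elements generating, together with a torus, a group of exactly the shape $T\ltimes(\bigoplus_k f^k\cdot U)$ appearing in the proof of Theorem~\ref{main}, using $x_1$ as the $\mathbb{G}_a$-variable and rescaling the remaining coordinates for the torus, with $f$ a coordinate function of unbounded weight; by Claim~\ref{claimnotlinear} this group $G\subset\Bir(\mathbb{A}^n)$ is not linear. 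Now suppose $\psi\colon\Bir(\mathbb{A}^n)\xrightarrow{\sim}\Aut(Y)$. Then $\psi(T)$ is a torus in the ind-group $\Aut(Y)$, hence a closed algebraic subgroup (it is a one-parameter group of semisimple elements, and by Proposition~\ref{ind-group} and the theory in \cite{FK} the $\mathbb{G}_m$-action it defines is regular), and each $\psi(f^k\cdot U)$ is normalized by $\psi(T)$; by local finiteness of the $\psi(T)$-action each $\psi(f^k\cdot U)$ lands in some $\Aut(Y)_{m(k)}$, and since $\psi(T)$ has a fixed-point and its root subgroups of a given weight are uniformly bounded (finite-dimensionality of each $\Aut(Y)_m$), the weights of $\psi(f^k\cdot U)$ with respect to $\psi(T)$ must stabilize for large $k$ --- exactly as in the final paragraph of the proof of Claim~\ref{claimnotlinear} --- forcing the central element $\xi_k$ to act trivially, contradicting its non-trivial action on $\mathbb{K}f$. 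The main obstacle, and the step I expect to require the most care, is verifying that $\psi(T)$ really gives a \emph{regular} $\mathbb{G}_m$-action on $Y$ (so that the root-subgroup and local-finiteness machinery of Section~\ref{rootsubgroups} applies) rather than merely an abstract homomorphism $\mathbb{G}_m\to\Aut(Y)$; this is where \cite[Theorem B]{CRX19} on algebraicity of abstract-group embeddings of $\mathbb{G}_m$, together with the uncountability of $\mathbb{K}$ (to rule out countable pathologies in the ind-group filtration), does the essential work. Once that is in place, the contradiction is formal, and the $n=1$ case is absorbed since $\PGL_2(\K)$ likewise contains such a $G$ via the Bruhat decomposition only in a degenerate way --- for $n=1$ one argues separately and more directly as sketched above.
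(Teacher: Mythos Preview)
Your central idea has a genuine gap. You propose to transplant the non-linearity argument of Claim~\ref{claimnotlinear} from $\GL_n(\K)$ to the ind-group $\Aut(Y)$: build $G=T\ltimes\bigoplus_{k\ge1}f^k\cdot U$ inside $\Bir(\mathbb{A}^n)$, push it through $\psi$, and argue that the weights of $\psi(f^k\cdot U)$ with respect to $\psi(T)$ must ``stabilize for large $k$'' because each filtration piece $\Aut(Y)_m$ is finite-dimensional. But this is exactly backwards. The finiteness in Claim~\ref{claimnotlinear} comes from the \emph{global} finite-dimensionality of $\GL_n(\K)$: the weights occurring in its adjoint representation form a fixed finite set, so eventually two consecutive $V_k,V_{k+1}$ share the same weight set. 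In $\Aut(Y)$ there is no such bound; different $\psi(f^k\cdot U)$ may land in different, arbitrarily deep, filtration pieces $\Aut(Y)_{m(k)}$, and a genuine torus action on $\Aut(Y)$ typically has infinitely many distinct root weights. Indeed, the very group $G$ you construct is precisely the kind of subgroup that the proof of Theorem~\ref{main} \emph{produces inside} $\Aut(X)$ whenever $X$ admits a suitable $\mathbb{G}_m$- and $\mathbb{G}_a$-action. So finding a copy of $G$ in $\Bir(\mathbb{A}^n)$ does not obstruct an isomorphism with some $\Aut(Y)$; it only obstructs linearity, which is not what you need here.

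The paper's argument is entirely different and does not go through Theorem~\ref{main} at all. It exploits the self-centralizing subgroups $\Tr$ and $T$ of $\Bir(\mathbb{A}^n)$: under a hypothetical isomorphism $\varphi\colon\Bir(\mathbb{A}^n)\to\Aut(X)$, these images are closed (by \cite[Lemma~5.2]{CRX19}-type reasoning), and then \cite[Theorem~A]{CRX19} forces $X\simeq\mathbb{A}^n$ with $\varphi(T)$ an honest $n$-dimensional algebraic torus. Next, each root subgroup $U\subset\PGL_{n+1}(\K)$ is a single $T$-orbit together with the identity, so $\varphi(U)$ is constructible, hence algebraic; this makes $\varphi(\PGL_{n+1}(\K))\subset\Aut(\mathbb{A}^n)$ a genuine algebraic subgroup isomorphic to $\PGL_{n+1}(\K)$. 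The contradiction is then classical: $\PGL_{n+1}(\K)$ has no faithful regular action on $\mathbb{A}^n$, since its only closed subgroups of codimension $\le n$ are parabolic with quotient $\mathbb{P}^n$. Your sketch never reaches the identification $X\simeq\mathbb{A}^n$, which is the decisive step.
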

\begin{proof}
The proof of this statement is similar to the proof of Theorem A in \cite{CRX19}. We give some details here for the convenience of the reader.
Let   
\[
\Tr = \{ (x_1,\dots,x_n) \mapsto (x_1 + c_1,\dots, x_n + c_n) \mid c_i \in \K \} \subset \Aut(\mathbb{A}^n) \subset \Bir(\mathbb{A}^n)
\]
be the subgroup of all translations and 
$\Tr_i$ be the subgroup of translations of the $i$-th coordinate:
\begin{equation}
(x_1, \ldots, x_n) \mapsto (x_1,\dots,x_i + c,\dots,x_n),
\end{equation}
where $c$ in $\K$.  Let
$\T\subset \GL_n(\K)\subset \Aut(\mathbb{A}^n) \subset \Bir(\mathbb{P}^n)$ be the diagonal group (viewed as a maximal torus) and let
$\T_i$ be the subgroup of automorphisms  
\begin{equation}
(x_1, \ldots, x_n)\mapsto (x_1,\dots,ax_i,\dots,x_n),
\end{equation}
where $a\in \K^*$. A direct computation shows that {\sl{$\Tr$ (resp. $T$) coincides with its centralizer in 
$\Bir(\mathbb{A}^n) = \Bir(\mathbb{P}^n)$}}.  
Assume towards a contradiction that  there is an isomorphism $\varphi\colon \Bir(\mathbb{A}^n) \to \Aut(X)$  of abstract groups.
Similarly as in \cite[Lemma 5.2]{CRX19} the groups $\varphi(\Tr)$, $\varphi(\Tr_i)$, $\varphi(T)$ and $\varphi(T_i)$ are closed subgroups of
$\Aut(X)$ for all $i = 1,\dots,n$.
	Now the proof of \cite[Theorem A]{CRX19} implies that $X \simeq \mathbb{A}^n$ and $\varphi(T) \subset \Aut(X \simeq \mathbb{A}^n)$ is isomorphic to the $n$-dimensional algebraic torus. Assume $U \subset \PGL_{n+1}(\mathbb{K}) \subset \Bir(\mathbb{A}^n)$ is a root subgroup with respect to $T$. This means that $T$ acts on $U$ with two orbits. 
	Therefore, $\varphi(U) \subset \Aut(X)$ is a constructible subset which is a group. We conclude that $\varphi(U) \subset \Aut(X)$ is an algebraic subgroup.
Moreover, since $\PGL_{n+1}(\mathbb{K})$ is generated by its finitely many root subgroups $U$ with respect to $T$, $\varphi(\PGL_{n+1}(\mathbb{K}))$ is generated by finitely many algebraic subgroups $\varphi(U)$ which implies that $\varphi(\PGL_{n+1}(\mathbb{K})) \subset \Aut(X \simeq \mathbb{A}^n)$ is an algebraic subgroup. Further, $\varphi(T) \subset \varphi(\PGL_{n+1}(\mathbb{K}))$ is a maximal subtorus that is isomorphic to $\mathbb{G}_m^n$ which means that $\varphi(\PGL_{n+1}(\mathbb{K}))$ is a simple algebraic group of rank $n$ that is isomorphic to $\PGL_{n+1}(\mathbb{K})$ as an abstract group. We conclude that $\varphi(\PGL_{n+1}(\mathbb{K}))$ is isomorphic to $\PGL_{n+1}(\mathbb{K})$ as an algebraic group. But this is not possible since the algebraic group $\PGL_{n+1}(\mathbb{K})$ does not act  regularly on $\mathbb{A}^n$ as the only closed subgroup $H$ of codimension $\le n$  of $\PGL_{n+1}(\mathbb{K})$ 
is a maximal parabolic  subgroup such that $\PGL_{n+1}(\mathbb{K})/H \simeq \mathbb{P}^n$.
We arrive to a contradiction with the isomorphism $X \simeq \mathbb{A}^n$. The proof follows.
\end{proof}

\begin{proof}[Proof of Theorem \ref{main3}]
Let $H \subset \Bir(X)$ be a maximal algebraic subtorus. By \cite[Theorem 1]{Ma} $X$ is birationally equivalent to $\mathbb{A}^l \times Z$, 
where $H$ acts on $\mathbb{A}^l$ with an open orbit and
 $Z \subset \mathbb{A}^r$ is an affine  variety with a trivial action of $H$.
 By Proposition \ref{Cremona} we can assume that $Z$ is positive dimensional. 
Assume there is an isomorphism $\varphi\colon \Bir(X) = \Bir(\mathbb{A}^l \times Z) \to \Aut(Y)$. 
Consider the maximal commutative subgroup $G$ of $\Bir(\mathbb{A}^l \times Z)$ of the form
\[
 \{ (x_1,\dots,x_l,z_1,\dots,z_r) \mapsto 
(f_1(z)x_1,\dots,f_l(z)x_l,g_1(z),\dots,g_r(z)) \mid f_i(z),g_i(z) \in \K(Z) \}
\]
that contains the commutative subgroup
\[
 \{ (x_1,\dots,x_l,z_1,\dots,z_r) \mapsto 
(f_1(z)x_1,\dots,f_l(z)x_l,z_1,\dots,z_r) \mid f_i(z) \in \K(Z) \},
\]
where the map
\[
Z \to Z \; \; (z_1,\dots,z_r) \mapsto 
(g_1(z),\dots,g_r(z))
\]
is a birational transformation of $Z$.

\begin{claim}\label{centralizer}
The subgroup 
$G \subset \Bir(\mathbb{A}^l \times Z)$
coincides with its centralizer.
\end{claim}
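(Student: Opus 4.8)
The plan is to take an arbitrary element $\psi \in \Bir(\mathbb{A}^l \times Z)$ that centralizes $G$ and show that it must lie in $G$, by exploiting the fact that $G$ contains ``enough'' multiplicative elements along the $\mathbb{A}^l$-factor to pin down the shape of $\psi$. First I would write $\psi$ in coordinates as
\[
(x_1,\dots,x_l,z_1,\dots,z_r) \mapsto \big(p_1(x,z),\dots,p_l(x,z),q_1(x,z),\dots,q_r(x,z)\big),
\]
with $p_i, q_j \in \K(x_1,\dots,x_l,z_1,\dots,z_r)$. The key subgroup to test against is the ``diagonal multiplicative part''
\[
D = \{ (x,z) \mapsto (f_1(z)x_1,\dots,f_l(z)x_l,z) \mid f_i \in \K(Z)^* \} \subset G,
\]
and in particular its constant subtorus $D_0 \simeq \G_m^l$ obtained by taking the $f_i$ to be nonzero scalars. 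Commuting $\psi$ with $D_0$ already forces, by a standard eigen-weight argument on rational functions, that each $q_j$ is invariant under scaling the $x_i$ (hence $q_j \in \K(Z)$, i.e.\ the $Z$-coordinates of $\psi$ depend only on $z$) and that each $p_i$ is a Laurent monomial in $x$ with coefficients in $\K(Z)$; commuting with the full group $D$ of $\K(Z)^*$-valued diagonal maps then forces each $p_i$ to be of the form $f_i(z) x_i$ (the monomial must be $x_i$ itself, with a single $\K(Z)^*$-coefficient, since otherwise conjugating a diagonal map $\mathrm{diag}(f_1,\dots,f_l)$ by $\psi$ would not land back in $D$).

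At this point $\psi$ has exactly the shape of a general element of $G$, namely $(x,z)\mapsto(f_1(z)x_1,\dots,f_l(z)x_l, g_1(z),\dots,g_r(z))$ with the $z$-map birational, \emph{except} that I still need to know this element actually commutes with all of $G$ and not just with $D$ --- but since $G$ is defined as a \emph{maximal} commutative subgroup containing $D$, and I have just shown any centralizer element of $D$ has the displayed form while also centralizing $G$ (by hypothesis) hence in particular commuting with every element of the displayed shape, the element $\psi$ together with $G$ generates a commutative subgroup, so maximality of $G$ gives $\psi \in G$. Concretely: the elements of the displayed form that commute with the subgroup $\{(x,z)\mapsto(f_1(z)x_1,\dots,f_l(z)x_l,z)\}$ are precisely those whose $z$-component $g = (g_1,\dots,g_r)$ induces a birational map of $Z$, which is exactly the defining condition for membership in $G$.

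The main obstacle I expect is the monomial-rigidity step: ruling out that $\psi$ could permute or mix the coordinates $x_1,\dots,x_l$ in a nontrivial monomial way while still centralizing $D$. The clean way around this is to use not just the torus $D_0$ but the \emph{full} group $D$ of $\K(Z)^*$-valued diagonal transformations: if $\psi$ conjugated $\mathrm{diag}(f_1,\dots,f_l)$ to something involving a genuine monomial $x^{\mathbf{a}}$ with $\mathbf{a}$ not a standard basis vector, the conjugate would act on $x_i$ with multiplier $\prod_k f_k^{a_k}$, and requiring this to be realizable inside $D$ for \emph{all} choices of $f_k \in \K(Z)^*$ (as $\psi$ is fixed but $\mathrm{diag}(f)$ ranges over $D$) forces $\mathbf{a}$ to be a permutation matrix row; a further comparison --- using that $D_0 \subset G$ already has the $f_k$ free over $\K^*$ and that conjugation by $\psi$ must be the identity on $G$, not merely an automorphism of $D$ --- kills the permutation and leaves only the identity, so $\mathbf{a} = \mathbf{e}_i$. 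Everything else is a routine unwinding of rational-function identities, which I will leave to the reader.
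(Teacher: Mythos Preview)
Your approach is correct and is essentially the same as the paper's: take an arbitrary centralizing element, write it in coordinates, and use commutation with the diagonal torus to force the displayed shape, then invoke maximality of $G$. However, you over-complicate the key step. Commuting $\psi$ with the constant subtorus $D_0\simeq\G_m^l$ alone gives the identities
\[
p_i(t_1x_1,\dots,t_lx_l,z)=t_i\,p_i(x,z),\qquad q_j(t_1x_1,\dots,t_lx_l,z)=q_j(x,z)
\]
for all $t\in(\K^*)^l$, so $p_i$ is a $\G_m^l$-semi-invariant rational function of weight exactly $e_i$. A rational function over $\K(Z)$ that is semi-invariant of weight $e_i$ for the full rank-$l$ torus is automatically of the form $h_i(z)x_i$ (write $p_i=P/Q$ with $P,Q$ coprime; then $P,Q$ are themselves semi-invariant, hence monomials, and the weight condition forces the quotient to be $x_i$). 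Thus no appeal to the larger group $D$ of $\K(Z)^*$-valued diagonal maps is needed, and your entire ``monomial-rigidity'' paragraph is superfluous: there is no possibility of a nontrivial monomial or permutation once the weight is pinned down to $e_i$. The paper's proof does exactly this short computation with the constant torus and stops there; your explicit use of the maximality of $G$ for the final inclusion $\psi\in G$ is a point the paper leaves implicit.
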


To prove this claim consider a birational transformation $\phi$ of $\mathbb{A}^l \times Z$ of the form
\[
 (x_1,\dots,x_l,z_1,\dots,z_r) \mapsto 
(F_1(x,z),\dots,F_l(x,z),G_1(x,z),\dots,G_r(x,z)),
\]
where $F_i(x,z),G_i(x,z) \in \K(\mathbb{A}^l \times Z)$, $x=(x_1,\dots,x_l)$, $z=(z_1,\dots,z_r)$ that commutes with each element from $G$. Hence, $\phi$ commutes with $T \subset G$, i.e., with all birational transformations of $\mathbb{A}^l \times Z$ of the form 
\[
 (x_1,\dots,x_l,z_1,\dots,z_r) \mapsto 
(t_1x_1,\dots,t_lx_l,z_1,\dots,z_r),\; \;   t_1,\dots,t_r \in \mathbb{K}^*.
\]
Direct computations show that
\[
 t_i F_i(t_1^{-1}x_1,\dots,t_l^{-1}x_l,z_1,\dots,z_r ) = F_i(x_1,\dots,x_l,z_1,\dots,z_r)
\]
and
\[
G_j(t_1^{-1}x_1,\dots,t_l^{-1}x_l,z_1,\dots,z_r ) = G_j(x_1,\dots,x_l,z_1,\dots,z_r)
\]
for all $t_1,\dots,t_r \in \mathbb{K}^*$, $i=1\dots,l$, $j=1,\dots,r$.
Therefore, 
$F_i(x,z) = h_i(z)x_i$ for some $h_i(z) \in \mathbb{K}(Z)$ and $G_j \in \mathbb{K}(Z)$. This proves the claim.

By \cite[Lemma 2.4]{LRU} $\varphi(G) \subset \Aut(Y)$ is a closed ind-subgroup and
 by \cite[Theorem B]{CRX19} (see also \cite[Corollary 3.2]{RvS21}) the connected component 
 $\varphi(G)^\circ \subset \Aut(Y)$ is the union of commutative algebraic subgroups. Since  $\varphi(G)^\circ \subset \varphi(G)$ is a countable index subgroup, there is an element $g=(f_1(z)x_1,x_2,\dots,x_l,z_1, \dots,z_r) \in G$ with non-constant $f_1$ such  that $\varphi(g)$ belongs to $\varphi(G)^\circ$. 
Since $\varphi(G)^\circ$ is the union of connected algebraic groups, $\varphi(g)$ belongs to a connected algebraic subgroup of $\varphi(G)^\circ$ and hence there exists $k > 0$ such that $\varphi(g)^k$ is a divisible element (see Section \ref{divisibleelements}). The element $g^k$ again has a form
$(\tilde{f}_1(z)x_1,x_2,\dots,x_l,z_1,\dots,z_r)$ with a non-constant $\tilde{f}_1 = \frac{r_1}{r_2}$, where $r_1, r_2 \in \mathcal{O}(Z)$. Moreover, $g^k \in G$ is not divisible. More precisely, without loss of generality we can assume that $r_1$ is non-constant and hence, there is no $h \in  G$ such that $h^{\deg r_1 + 1} = g^k$. Indeed, otherwise there would exist a rational function $s \in \K(Z)$ such that $s^{\deg r_1 + 1} = \tilde{f}_1 = \frac{r_1}{r_2}$ which is not the case. We get the contradiction which proves the theorem.
\end{proof}

\end{document}